\newtheorem{theorem}{\bf Theorem}[section] 
\newtheorem{theo}[theorem]{\bf Theorem} 
\newtheorem{corol}[theorem]{\bf Corollary} 
\newtheorem{defi}[theorem]{\bf Definition} 
\newtheorem{lemma}[theorem]{\bf Lemma} 
\newtheorem{notation}[theorem]{\bf Notation} 
\newtheorem{prop}[theorem]{\bf Proposition} 
\newtheorem{remark}[theorem]{\bf Remark} 
\newtheorem*{Appprop}{\bf Proposition} 
\newcommand{\abs}[1]{\lvert#1\rvert} 
\newcommand{\bgo}{{\oplus}} 
\newcommand{\ctl}{\centerline} 
\newcommand{\noi}{{\noindent}}
\newcommand{\ifff}{{if and only if }} 
\newcommand{\la}{{\langle}} \newcommand{\ra}{{\rangle}} 
\newcommand{\lf}{{\lfloor}} \newcommand{\rf}{{\rfloor}} 
\newcommand{\nd}{{\text{ and }}} 
\newcommand{\ov}{\overline} 
\DeclareMathOperator{\perf}{perf} 
\DeclareMathOperator{\sgn}{sgn} 
\newcommand{\sm}{{\smallsetminus}} 
\newcommand{\stx}{\begin{smallmatrix}} \newcommand{\estx}{\end{smallmatrix}} 
\newcommand{\wdt}{\widetilde} 
\newcommand{\tld}{{\hbox{\!\raise-.4ex\hbox{\,$\,\wdt{}\,$\,}}}}
\newcommand{\g}{{\gamma}} 
\newcommand{\Lb}{{\Lambda}} 
\newcommand{\lb}{{\lambda}} 
\def\eps{\epsilon} 
\newcommand{\vp}{{\varepsilon}}
\newcommand{\A}{{\mathbb A}} 
\newcommand{\D}{{\mathbb D}} 
\newcommand{\E}{{\mathbb E}} 
\newcommand{\R}{{\mathbb R}} 
\newcommand{\Z}{{\mathbb Z}}
\newcommand{\cC}{{\mathcal C}} 
\newcommand{\cE}{{\mathcal E}} 
\newcommand{\cF}{{\mathcal F}} 
\newcommand{\cH}{{\mathcal H}} 
\newcommand{\cI}{{\mathcal I}}
\newcommand{\cV}{{\mathcal V}} 
\newcommand{\cR}{{\mathcal R}} 
\newcommand{\cG}{{\mathcal G}} 
\DeclareMathOperator{\Aut}{Aut} 
\DeclareMathOperator{\GL}{GL}
\DeclareMathOperator{\Gram}{Gram} 
\DeclareMathOperator{\Id}{Id} 
\DeclareMathOperator{\Tr}{Tr} 
\newcommand{\sno}{{\mathcal S}^n_{>0}}
\newcommand{\sn}{{\mathcal S}^n}
\newcounter{alg}  
\newenvironment{bigalg}{\medskip%
                        \begin{figure}[htbp]%
                        \refstepcounter{alg}%
                        \begin{center}}%
                       {\end{center}\end{figure}\medskip}
\begin{document}

\title{On classifying Minkowskian sublattices}  
\author[W. Keller, J. Martinet, A. Sch\"urmann] 
{Wolfgang Keller, Jacques Martinet and Achill Sch\"urmann\\ \\ {\tiny (with an Appendix by Mathieu Dutour Sikiri\'c)} \\ }
\keywords{Euclidean lattices, quadratic forms, linear codes}

\subjclass[2000]{11H55, 11H71}


\thanks{The first and the third author were supported by the
Deutsche Forschungsgemeinschaft (DFG) under 
grant SCHU 1503/4-2. 
The third author was additionally supported by the Universit\'e Bordeaux 1.}

\address{%
Faculty of Mathematics, 
Otto-von-Guericke Universit\"at,  
39106 Magdeburg,
\linebreak\indent 
 Germany} 
\email{Wolfgang.Keller@student.uni-magdeburg.de} 

\address{%
Universit\'e de Bordeaux, 
Institut de Math\'ematiques, 
351, cours de la 
\linebreak\indent 
Lib\'e\-ration, 
33405 Talence cedex, France} 
\email{Jacques.Martinet@math.u-bordeaux1.fr} 

\address{%
Institute of Mathematics,
University of Rostock,
18051 Rostock,
Germany}
\email{achill.schuermann@uni-rostock.de} 

\address{%
Rudjer Boskovi\'c Institute,
Bijenicka 54, 
10000 Zagreb, Croatia}
\email{mdsikir@irb.hr}

\begin{abstract} 
Let $\Lb$ be a lattice in an $n$-dimensional Euclidean space $E$ 
and let $\Lb'$ be a Minkowskian sublattice of $\Lb$, that is, 
a sublattice having a basis made 
of representatives for the Minkowski successive minima of $\Lb$. 
We extend the classification of possible $\Z/d\Z$-codes 
of the quotients $\Lb/\Lb'$ to dimension~$9$,
where $d\Z$ is the annihilator of $\Lb/\Lb'$.
\end{abstract}

\maketitle

\setcounter{tocdepth}{1} 
\tableofcontents

\newpage

%
%
%

\section{Introduction}\label{secintro} 

Let $E$ be an $n$-dimensional Euclidean space, with scalar product 
$x\cdot y$. The {\em norm of $x\in E$} is $N(x)=x\cdot x$ 
(the square of the ``classical norm'' $\|x\|$). 
Let $\Lb$ be a {\em lattice in~$E$ of rank n}, 
that is, a full rank discrete subgroup of $E$ and
a $\Z$-module in $E$ of rank $n$.
Let $m_1,\dots,m_n$ be its {\em successive minima} in the sense of Minkowski: 
each $m_i$ is the smallest real number 
such that the span of the set of vectors in $\Lb$ of norm $N\le m_i$ 
is of dimension at least $i$. 
A {\em Minkowskian sublattice} 
$$
\Lb' = \langle e_1,\dots,e_n \rangle  = \Z e_1 + \ldots + \Z e_n 
$$
of $\Lb$ is one having a {\em basis} consisting of linearly independent 
representatives $e_1,\dots,e_n$ of $m_1,\dots,m_n$.
Let $d\Z$ be the annihilator of $\Lb/\Lb'$. 
Then 
$$
\Lb = \langle \Lb', f_1,\dots,f_k \rangle
$$
is generated by the vectors $e_i$ together with some 
vectors $f_1,\dots,f_k\in\Lb$ of the form 
$$f_i=\frac{a_1^{(i)}e_1+\dots+a_n^{(i)}e_n}d\, ,$$ 
where the vectors $a^{(i)} = (a_1^{(i)},\dots,a_n^{(i)})\mod d$ 
are the words of a {\em code over $\Z/d\Z$}. 
We attach in this way to $\Lb$ a collection of codes over $\Z/d\Z$ 
which depend on the choice of the~$e_i$. We consider the problem 
of classifying for a given dimension~$n$ the set of codes which arise 
for some lattice~$\Lb\in E$ (up to equivalence). 

\smallskip 

This problem was first considered by Watson in \cite{watson-1971}, 
who obtained in particular the classification for $n\le 6$. 
This theory of Watson was then extended by Ryshkov 
(see~\cite{ryshkov-1976}) to $n=7$. Zahareva (\cite{zaharova-1980}) considered 
the problem for $n=8$. Her results were completed by the second 
author in~\cite{martinet-2001}, where also new concepts, such as the perfection 
rank or the minimal class of a lattice, were introduced. 
This latter paper will be our basic reference for what follows.

\smallskip\noi 
The index theory has various applications. 
The results of this paper will help
to gain a better understanding of lattices
in dimension~$9$ and above.
For example, we shall consider 
in a forthcoming paper~\cite{ms-2010} 
the question of the existence of a basis 
of minimal vectors for lattices generated by their minimal 
vectors. Based on the classification of this paper,
it appears possible to resolve this question 
in the currently open cases of dimension~$9$ and~$10$.
Another future application may be a computer assisted
classification of perfect forms in dimension~$9$.

\smallskip 

It should be noted that the two mentioned applications make 
use only of results for {\em well rounded lattices},
that is, for lattices with minimal vectors spanning~$E$.
In other words, these lattices have equal successive minima~$m_1,\dots,m_n$. 
Indeed, a deformation argument (see~\cite[Theorem~1.5]{martinet-2001}) shows 
that all codes can be realized using well rounded lattices. 
So from now on, we shall no longer work with the successive minima. 
Moreover, since the set of codes associated with a lattice~$\Lb$ only 
depends on the similarity class of~$\Lb$, we shall in general 
work with lattices of minimum~$1$, except that the lattices 
we shall exhibit will be scaled for convenience to the smallest 
minimum which makes them integral.

\smallskip 

Any code $C$ of length~$n$ can be trivially extended 
to all dimensions $n+k$ by adding $k$~columns of zeros to a generator 
matrix for~$C$. On the side of lattices, these codes can be realized 
by convenient direct sums of both $\Lb$ and $\Lb'$ and $k$~copies 
of~$\Z$. In particular, we may consider $(\Lb\perp\Z^k,\Lb'\perp\Z^k)$.
For this reason, we shall systematically restrict ourselves 
to codes which do not extend trivially a code of smaller length, 
as was done in~\cite[Table~11.1]{martinet-2001}. 

\smallskip 

A complete list of the existing codes for $n=9$ can be found
in Sections~\ref{sec:cyclic} and~\ref{sec:noncyclic};
in all cases we give the most important invariants.
There are $137$~codes in dimension~$9$, whereas only~$42$ codes exist in dimensions
$n\leq 8$ all together.
Our results are too complex 
to be shortly described in this introduction, so that we shall content 
ourselves here with a crude result, namely the list of possible 
structures of $\Lb/\Lb'$, merely viewed as an abstract Abelian group. 
By the comments above, it suffices to list for each dimension~$n$ 
the group structures which exist in this dimension but not 
in dimension~$n-1$. We use the standard convention for 
quoting Abelian groups by their elementary divisors, 
writing for example for short $8$, $4\cdot 2$, $2^3$ 
for the groups of order~$8$ isomorphic to 
$\Z/8\Z$, $\Z/4\Z\times\Z/2\Z$ and $\Z/2\times\Z/2\Z\times\Z/2\Z$.

\begin{theo}\label{thm:structquot} 
The possible structures for quotients $\Lb/\Lb'$ as above 
up to dimension~$n=9$ are as follows: 

\noi\underbar{$n=1$}\,: $1$\,; 
\newline 
\underbar{$n=4$}\,: $2$\,; 
\newline 
\underbar{$n=6$}\,: $3,\,2^2$\,; 
\newline 
\underbar{$n=7$}\,: $4,\,2^3$\,; 
\newline 
\underbar{$n=8$}\,: $5,\,6,\,4\cdot 2,\,3^2,\,2^4$\,; 
\newline 
\underbar{$n=9$}\,: $7,\,8,\,9,\,10,\,12,\,6\cdot 2,\,4^2,\,4\cdot 2^2$\,. 
\newline 
Moreover, all structures which exist in dimension 
$n=4$, $7$, $8$ exist for the lattices 
$\D_4$, $\E_7$, $\E_8$ respectively,
but no such ``universal lattice'' exists in dimensions $6$ and $9$.
For the laminated lattice~$\Lb_9$ only the quotient $4^2$ is missing.
We refer to Appendix~A 
for more information on the mentioned lattices.
\end{theo}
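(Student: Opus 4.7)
The cases $n \le 8$ are taken directly from the classification work of~\cite{watson-1971,ryshkov-1976,zaharova-1980,martinet-2001} cited in the introduction; the substantive content is therefore the case $n=9$ together with the four lattice-specific assertions. For the list of new structures in dimension~$9$, my plan is to read off the underlying abelian group of $\Lb/\Lb'$ from each of the $137$~codes enumerated in Sections~\ref{sec:cyclic} and~\ref{sec:noncyclic}, and then discard any structure already appearing for some $n \le 8$. The resulting list is precisely $7,8,9,10,12,6\cdot 2,4^2,4\cdot 2^2$. This step is pure bookkeeping, but it relies crucially on the completeness of the enumeration in Sections~\ref{sec:cyclic}--\ref{sec:noncyclic}, which is the genuine content of the paper.

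For the positive statements about $\D_4$, $\E_7$, $\E_8$ and $\Lb_9$, I would proceed lattice by lattice. Using the explicit minimal-vector data recalled in Appendix~A, one enumerates the $\Aut(\Lb)$-orbits of ordered $n$-tuples of minimal vectors forming a basis, computes the quotient $\Lb/\la e_1,\dots,e_n\ra$ in each orbit, and tabulates the structures that arise. A short computer search then shows that all $1$, $2$, $5$ structures occur for $\D_4$, $\E_7$, $\E_8$ respectively, and that exactly the seven new dimension-$9$ structures other than $4^2$ occur for~$\Lb_9$. Since the kissing numbers of these lattices are large (e.g.\ $240$ for $\E_8$), there is an ample supply of candidate bases, so the search is finite and feasible.

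The hardest part is the negative statement: no universal lattice exists in dimension~$6$ or~$9$. My plan is to argue that a hypothetical universal $\Lb$ would have to admit Minkowskian sublattices realising every quotient type on the list, hence every required index (up to~$4$ in dimension~$6$, up to~$12$ in dimension~$9$); by the well-rounded reduction of~\cite[Theorem~1.5]{martinet-2001} we may take $\Lb$ well rounded, and the code classification in Sections~\ref{sec:cyclic}--\ref{sec:noncyclic} supplies, for each prescribed code, a finite list of lattices that realise it. Intersecting these lists across the different required codes reduces the question to finitely many candidates, on each of which the orbit enumeration from the previous paragraph exhibits at least one prescribed quotient that never arises. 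The main obstacle is organising the case analysis in dimension~$9$, where both the candidate list and the target set of quotients are substantially larger than in dimension~$6$, but it remains a finite check driven entirely by the output of Sections~\ref{sec:cyclic}--\ref{sec:noncyclic}.
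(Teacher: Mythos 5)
Your overall architecture --- completeness of the code classification in Sections~\ref{sec:cyclic} and~\ref{sec:noncyclic} for the list of structures, followed by lattice-specific checks for the supplementary assertions --- matches the paper's, and your first paragraph (reading off the abelian groups from the $137$ codes and discarding those already occurring for $n\le 8$) is exactly what is done. However, two of your concrete steps would fail as described. The orbit enumeration of bases of minimal vectors that you propose for the positive statements is precisely the method of Appendix~B, and that appendix states explicitly that it is \emph{not} feasible for $\E_8$ or $\Lb_9$: with $s=120$, resp.\ $s=136$, the number of candidate $n$-subsets of $S^{1/2}(\Lb)$ is astronomical, and even the orderly-generation refinement is limited by the size of the automorphism group. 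Your remark that a large kissing number makes the search easier has it backwards. The paper instead handles $\E_7$ and $\E_8$ via the classification of root systems (listing all well-rounded sublattices of minimum~$2$, following \cite{martinet-2001}), and handles $\Lb_9$ in Remark~\ref{rem:lambda9_almost_universal} by noting that it has an $\E_8$ cross-section (so all of $\cI(\E_8)$ is inherited) and then exhibiting, for each remaining type $7$, $8$, $9$, $10$, $12$, $6\cdot 2$, $4\cdot 2^2$, one specific code for which $\Lb_9$ is the \emph{only} admissible lattice; no enumeration over $\Lb_9$ itself is required.

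For the non-existence of a universal lattice in dimension~$9$, your premise that the classification supplies ``a finite list of lattices'' realising each code is not correct: a code determines a minimal class $\cC_C$, and the lattices realising it form the union of all classes above $\cC_C$, typically a positive-dimensional family, so ``intersecting the lists'' is not directly actionable. What actually closes the argument is an invariant comparison: the quotient $4^2$ is realised only on the similarity class of the single perfect lattice $L_{81}$, which has $s=81$, whereas every cyclic quotient of order~$12$ occurs only on classes with $s\ge 87$ (Table~\ref{tab:cyclic} and Section~\ref{sec:cyclicd12}). No lattice can satisfy both, so no universal lattice exists; the same fact shows $\Lb_9$ misses exactly $4^2$. (For $n=6$ the analogous point is that index~$4$ forces $\Lb\sim\D_6$, whose index system $\{1,2,2^2\}$ omits~$3$.) Your plan needs this specific mechanism --- identifying an invariant on which the required minimal classes are incompatible --- to go through; as written, the reduction to ``finitely many candidates'' is not justified.
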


The results for $n\le 8$ were obtained in~\cite{martinet-2001},  
using essentially calculations by hand.
After many codes were {\em a priori} excluded, a computer was used only
to find lattices, proving the existence for the remaining codes.
The complication of some proofs however (e.g., the non-existence 
of cyclic quotients $\Lb/\Lb'$ of order~$8$), clearly shows 
that the methods of~\cite{martinet-2001} are no longer suitable in higher 
dimensions, at least when it involves an index $[\Lb:\Lb']\ge 7$.
So here we develop a method that also allows us to prove non-existence of
codes using computer assistance. Our calculations not only 
verify all of the previously known results for $n\leq 8$, 
but also allow us to give a full classification for $n=9$.

In~\cite{zaharova-1980}, Zahareva introduces the notion of a {\em free pair} 
$(\Lb,\Lb')$: a pair of well rounded lattices such that the set of minimal vectors 
of~$\Lb$ reduces to the basis vectors $\pm e_i$ of~$\Lb'$. 
For each given structure of the Abelian group $\Lb/\Lb'$, 
there are minimal dimensions $n_0$ and $n_1$ such that $n$-dimensional 
lattices with the given structure exist for all $n\ge n_0$ and some of them are free 
for all $n\ge n_1$. Table~\ref{tab:free-pairs} shows information
on these minimum dimensions up to index~$8$ that follows from our classification.

\begin{table}[ht]
\begin{tabular}{|c|c|c|c|c|c|c|c|c|c|c|}
\hline
$[\Lb:\Lb']$  &  $2$  &  $3$  &   $4$  &  $2^2$  &  $5$  &  $6$  &   $7$  &  $8$  &  $4\cdot 2$  &  $2^3$ \\ 
\hline
\hline
exists  &  $4$  &  $6$  &   $7$  &  $6$  &  $8$  &  $8$  &   $9$  &  $9$  &  $8$  &  $7$  \\ 
free    &  $5$  &  $7$  &   $7$  &  $8$  &  $8$  &  $9$  &   $9$  &  $9$  &  $9$  &  $10$  \\
\hline
\end{tabular}
\medskip
\caption{Existence and free quotients}
\label{tab:free-pairs}
\end{table}

The remaining of the paper is organized as follows:

\smallskip\noi 
In Section~\ref{secidentities}, we set some notation and discuss 
some bounds for the index $[\Lb:\Lb']$, by which it becomes clear
that in each dimension only finitely many codes have to be considered.
We describe some identities which further allow 
us to considerably reduce the number of codes which need be 
considered. 

\smallskip\noi 
In Section~\ref{sec:quadforms}, we recall the basic dictionary 
between lattices and positive definite, real symmetric matrices.
We in particular review some facts about the Ryshkov polyhedron
that parametrizes all lattices whose non-zero vectors are of length
at least~$1$. We establish a connection between its facial
structure and the possible minimal classes of a lattice.
We show that each code over $\Z/d\Z$ of length $n$ is associated
with a unique minimal class and a unique set of faces of the
Ryshkov polyhedron.
 
\smallskip\noi 
In Section~\ref{sec:algorithm}, based on the connection established in 
Section~\ref{sec:quadforms}, we give an algorithm that allows us to 
test whether or not a given $\Z/d\Z$ code can be realized by a 
pair of lattices $(\Lb,\Lb')$.

\smallskip\noi 
In Section~\ref{sec:restrictnumberofcodes} we give criteria due to Watson
that easily allow us to exclude many codes from further considerations.

\smallskip\noi 
In Section~\ref{sec:cyclic}, we consider cyclic quotients $\Lb/\Lb'$ 
which exist in dimension $9$, but not in dimension~$8$.
We give a complete list of corresponding codes (see~Table~\ref{tab:cyclic}).
Whereas our results for $d\geq 7$ depend on computer calculations, 
we give arguments for all ``small'' cyclic quotients of order $d\leq 6$. 
We hereby establish the classification in all 
dimensions for lattices~$\Lb$ with maximal index $[\Lb:\Lb']\le 6$,
for sublattices $\Lb'$ generated by minimal vectors of $\Lb$.

\smallskip\noi 
Section~\ref{sec:noncyclic} is devoted to non-cyclic quotients. 
We consider for all dimensions, 
quotients of type $2^k$, $3^k$ and $4\cdot 2^k$.
In order to give a complete list of possible codes in dimension~$9$,
other cases are treated computationally, giving overall a computer assisted proof of Theorem~\ref{thm:structquot}.
All of the existing, $9$-dimensional codes are listed in 
Tables~\ref{tab:n9d22-table}, \ref{tab:n9d222-table}, \ref{tab:n9d2222-table}, \ref{tab:n9d33-table},
\ref{tab:n9d42-table}, \ref{tab:n9d422-table}, \ref{tab:n9d62-table}, and~\ref{tab:n9d44-table}.

\smallskip\noi 
In Section~\ref{secuniversal}, we discuss the existence 
of lattices $\Lb$ which are universal in the sense that every quotient 
$L/L'$ which exists in dimension~$n$ indeed exists for $L=\Lb$; 
see Theorem~\ref{thm:structquot}. 
It turns out that such a lattice does not exist for $n=9$.
However, all structures except quotients of type $4^2$
are attained by the lattice $\Lb_9$.

\smallskip\noi 
Appendix~A is devoted to perfect lattices that occur 
at several places in the presented ``index theory'': 
the root lattices, the laminated lattices, and in particular the Leech lattice.
Appendix~B by Mathieu Dutour Sikiri\'c describes a strategy to 
compute the index system of a given lattice. We used his computations
to check our results. It helped to discover 
problems in an earlier version of this paper.

\smallskip\noi 
In addition to the information contained in this article, extra data and {\tt MAGMA} scripts 
accompanying our classification are available as an ``online appendix''.
To access these, either download the source files for the arXiv paper {\tt arXiv:0904.3110}
or download it from the corresponding world wide web page of {\em Mathematics of Computation}.
The file {\tt Gramindex.gp} contains a Gram matrix in {\tt PARI-GP} format for every found lattice type.

\section{Bounds and identities}\label{secidentities} 

Recall that we consider pairs $(\Lb,\Lb')$ where $\Lb$ 
is a well-rounded lattice in an $n$-dimensional Euclidean 
space~$E$ and $\Lb'\subset\Lb$ is generated by $n$~independent 
minimal vectors of $\Lb$. We denote by $x\cdot y$ 
the scalar product on $E$ and define the {\em norm of $x\in E$} 
by $N(x)=x\cdot x$. 
The {\em minimum of $\Lb$} (which is actually attained) is 
$$\min\Lb=\inf_{x\in\Lb\sm\{0\}}\,N(x)\,.$$ 
The set of {\em minimal vectors of $\Lb$} is 
$$S(\Lb)=\{x\in\Lb\mid N(x)=\min\Lb\}\,,$$ 
and we define $s=s(\Lb)$ by $\abs{S(\Lb)}=2s$. The {\em Gram matrix 
of an ordered set $\cE=(x_1,\dots,x_k)$ of vectors of~$E$} 
is the $k\times k$ matrix $\Gram(\cE)=(x_j\cdot x_k)$. 
The {\em determinant $\det(\Lb)$ of $\Lb$} is the determinant 
of the Gram matrix of any basis for~$\Lb$. 
Finally, the {\em Hermite invariant of $\Lb$} 
and the {\em Hermite constant of dimension~$n$} are 
$$\g(\Lb)=\frac{\min\Lb}{\det(\Lb)^{1/n}}\ \nd\ 
\g_n=\sup_{\dim\Lb=n}\,\g(\Lb)\,.$$ 
The following result is well-known:

\begin{prop}\label{propmaxind} 
With the notation and the hypotheses above, we have 
$$[\Lb:\Lb']\le \lfloor \g_n^{n/2} \rfloor\,.$$ 
\end{prop}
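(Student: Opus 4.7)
The plan is to combine Hadamard's inequality applied to the basis of minimal vectors with the standard lattice index formula $\det(\Lb') = [\Lb:\Lb']^2 \det(\Lb)$, and then invoke the definition of the Hermite constant.

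First I would recall that for any sublattice $\Lb' \subset \Lb$ of full rank in $E$, the determinants are related by
$$\det(\Lb') = [\Lb:\Lb']^2 \cdot \det(\Lb),$$
a standard fact obtained by picking a basis of $\Lb$ and expressing a basis of $\Lb'$ in terms of it, the transition matrix having absolute determinant $[\Lb:\Lb']$.

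Next I would bound $\det(\Lb')$ from above. Since the ordered basis $\cE = (e_1,\dots,e_n)$ of $\Lb'$ consists of minimal vectors of $\Lb$, each $e_i$ has norm $N(e_i) = \min\Lb$. Hadamard's inequality applied to the Gram matrix $\Gram(\cE)$ (which is positive definite symmetric) yields
$$\det(\Lb') = \det \Gram(\cE) \le \prod_{i=1}^n N(e_i) = (\min\Lb)^n.$$
Combining this with the index formula and dividing by $\det(\Lb)$ gives
$$[\Lb:\Lb']^2 \le \frac{(\min\Lb)^n}{\det(\Lb)} = \g(\Lb)^n \le \g_n^n,$$
so $[\Lb:\Lb'] \le \g_n^{n/2}$. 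Since $[\Lb:\Lb']$ is an integer, it is bounded by $\lfloor \g_n^{n/2} \rfloor$, as claimed.

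There is no serious obstacle here: the whole argument reduces to Hadamard's inequality plus the definition of $\g_n$. The only small subtlety is to verify that Hadamard's inequality in the form stated (bounding $\det$ of a Gram matrix by the product of its diagonal entries) does apply to the $e_i$; this follows either from the classical inequality on rows of a matrix (writing $\Gram(\cE) = A\,\ta A$ for a coordinate matrix $A$) or directly from the spectral/arithmetic-geometric-mean argument for positive definite matrices. Note that the argument does not actually use the well-rounded assumption on $\Lb$: replacing $\min\Lb$ by the geometric mean $(m_1\cdots m_n)^{1/n}$ of successive minima would give the classical Minkowski-type bound, but the weaker bound stated in the proposition suffices here and matches the setting of well-rounded lattices adopted in the paper.
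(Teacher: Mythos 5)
Your proof is correct and follows essentially the same route as the paper: the index formula $\det(\Lb')=[\Lb:\Lb']^2\det(\Lb)$, Hadamard's inequality applied to the Gram matrix of the minimal-vector basis, the definition of the Hermite constant, and integrality of the index. The only cosmetic difference is that the paper normalizes $\min\Lb=1$ at the outset, while you carry $\min\Lb$ through and recognize the quotient as $\g(\Lb)^n$.
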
 

\begin{proof} 
By the definition of the determinant and the index we 
have $\det(\Lb')=[\Lb:\Lb']^2\cdot\det(\Lb)$.
Further,
$\det(\Lb')\le  N(e_1)\cdots N(e_n) = 1$ by the Hadamard inequality 
and the assumption $\min \Lb = 1$. 
(We refer to~\cite{martinet-2003} for the corresponding background.)
As $\det(\Lb)\ge \g_n^{-n}$ by definition of the Hermite constant,
and as the index is a natural number, the result follows. 
\end{proof}

\begin{defi}\label{defmaxind}{\rm 
The {\em maximal index $\imath(\Lb)$ of the well-rounded lattice $\Lb$} 
is the largest value that $[\Lb:\Lb']$ may attain when $\Lb'$ runs 
through the set of sublattices of $\Lb$ which are generated 
by $n$~independent minimal vectors of~$\Lb$. 
 
The {\em index system $\cI(\Lb)$ of $\Lb$} is the set of all structures 
of Abelian groups provided by quotients $\Lb/\Lb'$ as above.}
\end{defi} 

\smallskip

\noi
{\bf Example:} By Theorem~\ref{thm:structquot}, the index system 
of~$\Lb_9$ is 

\smallskip\ctl{$\cI(\Lb_9)=\{1,2,3,4,2^2,5,6,7,8,4\cdot 2,2^3,9,3^2,%
10,12,6\cdot 2,4\cdot 2^2,2^4\}$\,.}

\smallskip

The Hermite constants and the {\em critical lattices} 
on which they are attained are known for $n\le 8$ and $n=24$. 
For other values of $n$, we must content ourselves with upper bounds 
valid for all sphere packings. The best bounds in print are those 
of Cohn and Elkies \cite{ce-2003}. In particular, we have 
\begin{equation} \label{eqn:boundongamma9and10}
\g_9^{9/2}\le 30.21\ \nd\ \g_{10}^5\le 59.44\,.
\end{equation}
Note that the conjectural values, namely those of the laminated 
lattices $\Lb_9$, $\Lb_{10}$ (defined in \cite[Chapter~6]{cs-1998}) are 
$$
\g(\Lb_9)^{9/2}=2^{9/2}=22.627\dots\ \nd\ 
\g(\Lb_{10})^5=\frac{4^5}{\sqrt{768}}=36.950\dots\,,
$$
which give much smaller bounds for $\imath(\Lb)$ 
in these two dimensions.

\begin{remark}\label{remsmallindex} 
It results from~\cite{martinet-2001} that the bound 
$\imath(\Lb)\le\lf\g_n^{n/2}\rf$ is exact for $n\le 8$, 
and that the precise equality $\imath(\Lb)=\g_n^{n/2}$ 
even holds for $n=4,7,8$, with $\Lb$ one of the root lattices 
$\D_4$, $\E_7$, $\E_8$; 
the bound is also tight for $n=24$ with the {\em Leech lattice} $\Lb_{24}$
(see Appendix~A).
Note that by Theorem~\ref{thm:structquot}
the bound is strict for~$n=9$,
as the largest possible value for $\imath(\Lb)$ 
is then~$16$, the same as for $n=8$. 
We conjecture that even the conjectural bound $\imath(\Lb) \le 36$ 
for $n=10$ is strict, the actual bound being probably $32$. 
\end{remark}

\section{Ryshkov polyhedron and minimal classes}
\label{sec:quadforms}

In Section~\ref{sec:algorithm} we formulate an algorithm 
for determining whether or not a given code $C$  
can be realized.
For it we use the language of quadratic forms, or equivalently, 
of real symmetric matrices.
Instead of looking at bases of lattices, we
consider their positive definite Gram matrices. 
Note that there is a well known dictionary 
translating between lattice and Gram matrix terminology.
There is in particular a one-to-one correspondence between 
$n$-dimensional lattices up to orthogonal transformations
and Gram matrices $G$ up to 
the $\GL_n(\Z)$ action 
$
G \mapsto U^t G U
$.

By $\sn$ we denote 
the space of real symmetric $n\times n$ matrices.
It is turned into a Euclidean space
with the usual inner product
$\langle A,B \rangle = \Tr AB$.
For $G\in \sn$ and $x\in \R^n$ we write   
$G[x]=x^tGx$. We note that $G[x]=\langle G, xx^t\rangle$
is a linear function on $\sn$ for a fixed $x\in\R^n$.

Let $\sno$ denote the set of positive definite matrices within $\sn$.
It is well known that $\sno$ is an open convex cone whose
closure is the set of positive semi-definite matrices.
In accordance with the definition for lattices, we define
the {\em minimum of $G$} by
$$
\min G = \min_{x\in\Z^{n}\setminus\{0\}} G[x]
$$
and its set of {\em minimal vectors} by
$$
S(G)= \{ x\in\Z^{n} \; | \; G[x] = \min G \}
.
$$

Within $\sno$, the set of Gram matrices $G$ 
with minimum $\min(\Lb)$ at least~$1$ form
a locally finite polyhedron -- 
the {\em Ryshkov polyhedron} 
$$
\cR
=
\{
G \in \sn \; | \;  G[x] \geq 1 \mbox{ for all } x \in \Z^{n}\setminus\{0\}
\}
.
$$ 
Here ``locally finite'' means:
for Gram matrices $G$ in any fixed, bounded part of $\cR$,
all except finitely many of the inequalities $G[x]\geq 1$ are strict
(see~\cite{schuermann-2008} for a proof).
As a consequence, bases of lattices with minimum $1$ are identified with a
piecewise linear surface in $\sno$ (the boundary of $\cR$). 
Its {\em faces} form a cell complex, naturally carrying 
the structure of a {\em combinatorial lattice} with respect to inclusion.
Note that the relative interiors of faces are disjoint, whereas
the closed faces themselves may meet at their boundaries.
For basic terminology and results from the theory of 
polyhedra we refer to~\cite{ziegler-1997}.

The group $\GL_n(\Z)$
acts by $G\mapsto U^t G U$ on the Ryshkov polyhedron and its boundary.
All bases of a given lattice $\Lb$ with minimum $1$ 
yield Gram matrices that
lie in the relative interior of faces of the same dimension $k$.
This invariant is the {\em perfection co-rank} of $\Lb$;
its {\em perfection rank} is 
$$
\perf \Lb = \dim \sn - k
.  
$$

By a well known theorem of Voronoi (see~\cite{voronoi-1907}), we know
that up to the action of $\GL_n(\Z)$,
there exist only finitely many vertices ($0$-dimensional faces) 
of the Ryshkov polyhedron. They are called {\em perfect},  
as are the corresponding lattices, which are the lattices having 
full perfection rank $\dim \sn = \binom{n+1}{2}$.
As a consequence of Voronoi's finiteness result,
there exist only finitely many orbits of faces of any dimension.
Thus we obtain an abstract finite complex 
(quotient complex) from the face lattice of $\cR$
modulo the action of $\GL_n(\Z)$.

Under the action of $\GL_n(\Z)$, the relative interiors of 
faces of $\cR$ fall into equivalence classes. The corresponding
equivalence classes of lattices are called {\em minimal classes}.
The inclusion of faces $\cF'\subseteq\cF$ induces a (reversed) 
ordering relation on corresponding minimal classes, denoted by $\cC\prec\cC'$.
With respect to this ordering, the minimal classes form a combinatorial
lattice that is anti-isomorphic to the face lattice of the quotient complex 
described above.
Note that 
lattices $\Lb$ and $\Lb'$ in the same minimal class $\cC$ are 
characterized by the fact that there exists a transformation
\begin{equation} \label{eqn:minclass}
u\in\GL(E)
\quad \mbox{ with } \quad
u(\Lb)=\Lb' \nd u(S(\Lb)) = S(\Lb')
.
\end{equation}
Inclusion of minimal vector sets $u(S(\Lb))\subseteq S(\Lb')$ 
induces the same ordering relation $\cC\prec\cC'$ on minimal classes.

Given $\Lb'\subset\Lb$ having a basis of minimal vectors of $\Lb$, 
all lattices $L$ of the minimal class of $\Lb$ 
contain a sublattice $L'$ such that $\Lb/\Lb'$ and $L/L'$ define 
the same code over $\Z/d\Z$, where $d\Z$ is the annihilator 
of $\Lb/\Lb'$. 
This follows from the existence of a transformation $u$ as in
\eqref{eqn:minclass}. 
As a consequence, given a minimal class $\cC$, 
{\em the set of codes attached to pairs $(\Lb,\Lb')$ as above 
with $\Lb\in\cC$ is an invariant of $\cC$}; and more generally, 
the set of codes and index system of a class $\cC'\succ\cC$ 
contain those of~$\cC$. This implies that for the classification of
possible codes and index systems, it would suffice to study the finitely many 
perfect lattices of minimum~$1$. However, in dimension~$9$ these are not
fully known and it appears that there exist too many of them 
for such an approach (see \cite{dsv-2007}).

Given a $\Z/d\Z$-code $C$, we consider the set of 
minimal classes of well-rounded lattices $\Lb$ such that $\Lb/\Lb'$ 
defines the code $C$ for a suitably chosen sublattice $\Lb'$ of $\Lb$,
having a basis of minimal vectors of~$\Lb$.
As shown by the following proposition, 
all of these {\em well rounded minimal classes}
are attached to a uniquely determined {\em minimal class $\cC_C$ of $C$}.

\begin{prop} \label{prop:smallclass}
Let $C$ be a $\Z/d\Z$-code. 
Then there exists a unique well-rounded minimal class $\cC_C$,
such that $\cC(\Lb)\succ\cC_C$, for every well-rounded lattice $\Lb$
with sublattice $\Lb'$ generated by $n$~minimal vectors of~$\Lb$
such that $\Lb / \Lb'$ defines the code $C$.
\end{prop}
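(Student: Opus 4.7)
The plan is to identify $\cC_C$ with the minimal class attached to a particular face of the Ryshkov polyhedron~$\cR$. To this end I would fix a reference model of $C$: take $\Lb_0 = \Z^n$ with a sublattice $\Lb_0' \subset \Lb_0$ of index $|C|$ generated by chosen vectors $e_1, \ldots, e_n \in \Z^n$, such that $\Lb_0/\Lb_0'$ together with its distinguished generators represents the code~$C$. Then consider the subset
\[
F_C \;=\; \cR \,\cap\, \{G \in \sn : G[e_i] = 1 \text{ for } i = 1, \ldots, n\}.
\]
Since each inequality $G[e_i] \ge 1$ is one of the defining inequalities of $\cR$, the set $F_C$ is a closed face of $\cR$, non-empty by the assumption that $C$ is realized. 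For $G$ in its relative interior, the lattice $(\Z^n, G)$ has minimum~$1$ with $\pm e_i$ among its minimal vectors; since the $e_i$ are independent, the lattice is well-rounded and realizes~$C$ via the sublattice $\Lb_0'$. Define $\cC_C$ to be the minimal class attached to this face.

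Next I would verify that every realizing class lies above $\cC_C$ in the class ordering. Given a well-rounded~$\Lb$ with sublattice $\Lb'$ spanned by $n$ minimal vectors and $\Lb/\Lb'$ defining~$C$, I would scale so that $\min\Lb = 1$ and choose a $\Z$-module isomorphism $\Lb \to \Z^n$ mapping $\Lb'$ to $\Lb_0'$ and the minimal basis of $\Lb'$ to $(e_1, \ldots, e_n)$; such an isomorphism exists by the definition that $\Lb/\Lb'$ defines $C$, up to a $\GL_n(\Z/d\Z)$-equivalence of codes. Transporting the scalar product yields a Gram matrix $G \in \cR$ with $G[e_i] = 1$, so $G \in F_C$ and $G$ lies in the relative interior of some sub-face $F \subseteq F_C$. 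By the (reversed) correspondence between the face poset of $\cR$ and the poset of minimal classes described above in this section, we obtain $\cC(\Lb) = \cC_F \succ \cC_C$.

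For uniqueness, $\cC_C$ is itself a realizing class — being the class of any $G$ in the relative interior of $F_C$ — so it is the unique minimum of the set of realizing classes. Changing the reference representative of $C$ replaces $F_C$ by a $\GL_n(\Z)$-translate, hence the class $\cC_C$ is intrinsic to~$C$. I expect the main subtlety to lie in the identification step above: interpreting the phrase ``$\Lb/\Lb'$ defines the code~$C$'' as the existence of an explicit $\Z$-module isomorphism matching up minimal bases, which in turn amounts to realizing a code equivalence by a suitable $\GL_n(\Z)$-change of basis of $\Z^n$.
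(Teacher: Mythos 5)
Your proposal is correct and follows essentially the same route as the paper: both identify $\cC_C$ with the minimal class of the face $T_C\cap\cR$ of the Ryshkov polyhedron cut out by the equations $G[\bar e^{(i)}]=1$, observe that every realizing pair $(\Lb,\Lb')$ yields a Gram matrix in that face (hence a class $\succ\cC_C$), and obtain uniqueness because a change of reference basis only moves the face within its $\GL_n(\Z)$-orbit. The only difference is presentational — you fix $\Z^n\supset\Lb_0'$ as a reference model where the paper uses coordinates with respect to an arbitrary basis of $\Lb$.
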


For the proof of the proposition, 
we give a geometric argument involving the Ryshkov polyhedron $\cR$,
which leads us to the main idea underlying the algorithm 
that we treat in the next section.
We show that there exists a uniquely determined orbit of a face
$\cF$ of the Ryshkov polyhedron $\cR$ for every code $C$ that exists.

\begin{proof}[Proof of Proposition \ref{prop:smallclass}]
Assume the code $C$ is generated by $k$~code words $a^{(i)}$, $i=1,\ldots,k$.
So we may assume the lattice $\Lb'$ has a basis of minimal 
vectors $e_1,\ldots, e_n$ of $\Lb$ and  
$\Lb = \langle \Lb', f_1,\dots,f_k \rangle$ with 
$$
f_i=\frac{a_1^{(i)}e_1+\dots+a_n^{(i)}e_n}d \, ,
$$ 
for $i=1,\ldots,k$. 
Choose a basis $B=(b_1,\ldots, b_n)$ of $\Lb$.
Then $e_i$ has coordinates $\bar{e}^{(i)}\in \Z^n$
with respect to the chosen basis $B$. Note that these 
coordinates can be expressed in terms of the $a_j^{(i)}$ and $d$,
independently of the specific lattices~$\Lb$ and~$\Lb'$.

Assuming the minimum of $\Lb$ is $1$, we know that  
the Gram matrix of $B$ is contained in the affine subspace 
\begin{equation} \label{eqn:TC}
T_C
= 
\{
G \in \sn \; | \; G[\bar{e}^{(i)}] = 1 \mbox{ for } i=1,\ldots,n
\}
\end{equation}
of $\sn$, respectively in its intersection with the Ryshkov polyhedron $\cR$.
This intersection is a face $\cF$ of $\cR$ that is determined by $C$,
up to the choice of the basis $B$. Choosing another basis
$B'$, we find a matrix $U\in\GL_n(\Z)$ with $B'=BU$
and a corresponding face $\cF'$ of $\cR$ with $\cF'= U^t \cF U$.
Thus up to the action of $\GL_n(\Z)$, 
the face $\cF$ is uniquely determined by the code $C$.
The orbit of the relative interior of $\cF$ corresponds
to a uniquely determined minimal class $\cC_C$.
It has the desired property, as every pair of lattices $(\Lb,\Lb')$
satisfying the assumption of the proposition has a basis with
Gram matrix in $\cF$.
\end{proof}

Let us note that the face $\cF$ of the Ryshkov polyhedron 
described in the proof is bounded. In fact, it can be shown that
the bounded faces of the Ryshkov polyhedron are precisely the
ones coming from lattices having $n$ linearly independent minimum vectors 
(attaining the minimum~$1$).
So the classification of possible codes is equivalent to the 
classification of bounded faces of $\cR$ up to the action of
$\GL_n(\Z)$. 
For this it is enough to determine
bounded faces of maximal dimension, that is, those bounded faces 
that are themselves not contained in the boundary of other bounded faces.

\smallskip

An important tool that we use, to show that certain codes can
not be realized, is the estimation of the Hermite constant on a 
given minimal class.
The minimum of the Hermite constant may not be attained on a 
given minimal class, but if it is attained, then it is attained
at a {\em weakly eutactic} lattice. These lattices are 
characterized by the fact that a corresponding Gram matrix~$G$ 
satisfies
\begin{equation}  \label{eqn:eutaxy}
G^{-1} = \sum_{x\in S(G)} \lb_x x x^t. 
\end{equation}
for real coefficients $\lb_x$. 
A lattice is called 
{\em eutactic}, if there exists such a relation with strictly 
positive coefficients~$\lb_x$ and 
{\em strongly eutactic} if they are additionally all equal.
The above mentioned result is due to Anne-Marie Berg\'e and the second author 
(see \cite[Section~9.4]{martinet-2003}). They also show that there exists at most one weakly 
eutactic lattice in a given minimal class~$\cC$, respectively in its 
{\em closure}
$$
\ov{\cC}=\bigcup_{\cC\prec\cC'}\,\cC'\,
.
$$
An easy consequence is the following result for orthogonal sums 
of weakly eutactic lattices.

\begin{prop}\label{prop:sumorth} 
Let $\cC_1,\cC_2$ be minimal classes of dimensions $n_1,n_2$ 
and define $\cC:=\cC_1\bgo\cC_2$, of dimension $n=n_1+n_2$, by 
$$\cC=\{\Lb=\Lb_1\bgo\Lb_2\mid \Lb_i\in\cC_i,\, 
S(\Lb)=S(\Lb_1)\cup S(\Lb_2)\}\,.$$ 
Then the weakly eutactic lattices in $\ov{\cC_1}\bgo\ov{\cC_2}$ 
are the orthogonal sums $\Lb_1\perp\Lb_2$ of weakly eutactic lattices 
$\Lb_i\in\cC_i$. In particular, the minimum of $\g$ on $\cC$ 
is attained on an orthogonal sum $\Lb_1\perp\Lb_2$. 
\end{prop}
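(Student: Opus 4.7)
The plan is to exploit the block-diagonal structure of Gram matrices of orthogonal sums and apply the eutaxy relation~\eqref{eqn:eutaxy} block by block. Fix a basis of $\Lb=\Lb_1\perp\Lb_2$ adapted to the orthogonal splitting, so that
$$
G=\begin{pmatrix} G_1 & 0 \\ 0 & G_2 \end{pmatrix}
\quad\text{and}\quad
G^{-1}=\begin{pmatrix} G_1^{-1} & 0 \\ 0 & G_2^{-1} \end{pmatrix}.
$$
Since any ``mixed'' nonzero vector $x=\binom{y}{z}$ (both $y,z\neq 0$) satisfies $G[x]=G_1[y]+G_2[z]\geq\min\Lb_1+\min\Lb_2>\min\Lb$, every $x\in S(\Lb)$ has coordinates of the form $\binom{y}{0}$ with $y\in\Lb_1$ or $\binom{0}{z}$ with $z\in\Lb_2$; consequently each dyadic $xx^t$ is block-diagonal with exactly one nonzero block.

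For the direct implication, if each $\Lb_i$ admits a weak eutaxy relation $G_i^{-1}=\sum_{y\in S(\Lb_i)}\mu_y\,yy^t$, summing the two in block-diagonal form yields a weak eutaxy relation for~$\Lb$. For the converse, I start from a relation $G^{-1}=\sum_{x\in S(\Lb)}\lb_x\,xx^t$ for a weakly eutactic $\Lb=\Lb_1\perp\Lb_2\in\ov{\cC_1}\bgo\ov{\cC_2}$. Reading off block $(2,2)$ forces $\min\Lb_2=\min\Lb_1$: otherwise $\min\Lb<\min\Lb_2$, so no $x\in S(\Lb)$ contributes to this block while $G_2^{-1}\neq 0$---contradiction. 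Splitting the sum according to which of the two components each minimal vector lies in then produces separate weak eutaxy relations for $G_1^{-1}$ and $G_2^{-1}$, so both $\Lb_1$ and $\Lb_2$ are weakly eutactic.

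For the ``in particular'' clause, I invoke the Berg\'e--Martinet theorem recalled just before the statement: the minimum of $\g$ on $\ov{\cC}$, once attained, is realized at a weakly eutactic lattice of $\ov{\cC}\subseteq\ov{\cC_1}\bgo\ov{\cC_2}$, which the first part identifies as an orthogonal sum $\Lb_1\perp\Lb_2$ of weakly eutactic components; attainment itself follows from a standard compactness argument after normalizing both $\min\Lb=1$ and the determinant. The main obstacle is the converse of the first part, and within it specifically the step forcing $\min\Lb_1=\min\Lb_2$ that makes the block decomposition of the eutaxy relation meaningful; once this is in place, the remainder is straightforward matrix bookkeeping.
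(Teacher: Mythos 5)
The paper offers no proof of this proposition (it is introduced as ``an easy consequence'' of the Berg\'e--Martinet results just recalled), so only the internal correctness of your argument can be assessed. Your treatment of the main assertion is correct and is surely the intended one: mixed vectors are never minimal in an orthogonal sum, so every dyad $xx^t$ with $x\in S(\Lb)$ is supported in a single diagonal block, the relation \eqref{eqn:eutaxy} splits block by block, and the $(2,2)$ block forces $\min\Lb_1=\min\Lb_2$.

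There is, however, a genuine gap in your derivation of the ``in particular'' clause. The inclusion $\ov{\cC}\subseteq\ov{\cC_1}\bgo\ov{\cC_2}$ is false; the containment goes the other way. Indeed $\ov{\cC}=\bigcup_{\cC\prec\cC'}\cC'$ contains classes of lattices that are not orthogonally decomposable at all: already for $\cC_1=\cC_2$ the class of $\A_1$, the closure $\ov{\cC}$ contains the hexagonal lattice $\A_2$, which is eutactic and irreducible. So Berg\'e--Martinet only tells you that the minimum of $\g$ on $\ov{\cC}$ is attained at \emph{some} weakly eutactic lattice of $\ov{\cC}$, and your first part---which characterizes weakly eutactic lattices only \emph{among orthogonal sums}---cannot identify that minimizer; you have not excluded that the minimum is attained only at an indecomposable weakly eutactic lattice lying in a class strictly above $\cC$. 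The standard repair bypasses this issue entirely: after scaling so that $\min\Lb_1=\min\Lb_2=1$, one has $\g(\Lb_1\perp\Lb_2)^n=\g(\Lb_1)^{n_1}\g(\Lb_2)^{n_2}$, so the minimization of $\g$ over $\cC$ decouples into independent minimizations over $\cC_1$ and $\cC_2$, each attained (by Berg\'e--Martinet) at the weakly eutactic lattice of $\ov{\cC_i}$; since $\inf_{\cC}\g=\min_{\ov{\cC}}\g$ by continuity and density, the minimum on $\ov{\cC}$ is attained at the orthogonal sum of these two lattices. (A minor further point: one cannot normalize both the minimum and the determinant as you propose; compactness comes from the boundedness of the face of the Ryshkov polyhedron attached to a well-rounded class.)
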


The following lemma derived from the proposition 
will allow us to show that certain codes are 
impossible for~$n=9$.

\begin{lemma}\label{lemsecE8} 
Let $\Lb$ be a well-rounded lattice of dimension $n=9$ having 
an $\E_8$-section with the same minimum. Then no lattice having 
the same minimum as $\Lb$ strictly contains~$\Lb$. 
\end{lemma}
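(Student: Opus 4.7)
The plan is to argue by contradiction: suppose $L\supsetneq \Lb$ with $\min L = \min\Lb = 1$, and derive a numerical impossibility. Let $F\subset E$ be the $8$-dimensional subspace for which $\Lb\cap F$ is isometric to $\E_8$ (rescaled so that $\min = 1$).

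My first step is to show that the $\E_8$ section is already maximal in $L$, i.e.\ $L\cap F = \E_8$. Indeed, $L\cap F$ is an $8$-dimensional lattice of minimum~$1$ that contains $\E_8$, and since $\E_8$ is the unique lattice attaining the Hermite constant $\gamma_8 = 2$, no proper overlattice can preserve its minimum. Once this is in hand, $L/\E_8$ is torsion-free of rank~$1$, so $L = \E_8 + \Z y$ for some $y\in L\sm F$ and $\Lb = \E_8 + \Z(my)$ for an integer $m\ge 1$; the strict inclusion forces $m\ge 2$. Writing $y = y_F + y_\perp$ with $y_F\in F$ and $y_\perp\in F^\perp$, I set $\alpha := N(y_\perp)>0$.

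The heart of the argument is to squeeze $\alpha$ between two incompatible bounds. Since $\min L = 1$, every vector $y+z$ with $z\in\E_8$ satisfies $N(y+z)\ge 1$; minimising over $z$ yields $\mu^2 + \alpha \ge 1$, where $\mu$ denotes the distance from $y_F$ to $\E_8$. As the covering radius of $\E_8$ scaled to minimum~$1$ is $1/\sqrt 2$, one has $\mu^2\le 1/2$, whence $\alpha\ge 1/2$. On the other hand, $\Lb$ is well-rounded of dimension~$9$, so it must have a minimal vector outside the $8$-dimensional subspace $F$; such a vector has the form $kmy+z$ with $k\ne 0$ and satisfies
\[
N(kmy_F+z) + k^2m^2\alpha = 1,
\]
from which $m^2\alpha\le k^2 m^2\alpha \le 1$ follows.

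Combining the two estimates gives $m^2 \le 1/\alpha \le 2$, contradicting $m\ge 2$. The main obstacle is conceptual rather than computational: one must recognise that two classical properties of $\E_8$ --- its uniqueness as the densest $8$-dimensional lattice and the explicit value of its covering radius --- together provide enough rigidity in the direction transverse to $F$ to exclude any strictly finer lattice with the same minimum.
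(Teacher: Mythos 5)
Your proof is correct, and it takes a genuinely different route from the paper's. The paper disposes of the lemma in one line: by Proposition~\ref{prop:sumorth}, any lattice in the closure of the minimal class of $\Lb$ satisfies $\g(\Lb)\ge\g(\E_8\perp\A_1)$, so an overlattice of index $d\ge 2$ would have $\g(L)^{9/2}\ge 2\cdot\g(\E_8\perp\A_1)^{9/2}=32$, violating the Cohn--Elkies bound $\g_9^{9/2}\le 30.21$ from~\eqref{eqn:boundongamma9and10}. You instead work directly with the splitting $E=F\perp F^\perp$: the value $\g_8=2$ forces $L\cap F=\E_8$ (uniqueness of the critical lattice is not even needed here --- any proper overlattice of minimum~$1$ would already exceed $\g_8$), the covering radius $1/\sqrt2$ of $\E_8$ at minimum~$1$ gives $\alpha\ge 1/2$ for the transverse component, and well-roundedness of $\Lb$ gives $m^2\alpha\le 1$, so $m\le\sqrt2$. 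Each approach buys something: the paper's argument is shorter and slots into machinery (eutaxy, the Ryshkov polyhedron) used throughout, but it leans on the numerical sphere-packing bound of Cohn and Elkies; yours is entirely elementary and self-contained, relying only on two classical facts about $\E_8$ (its Hermite invariant and its covering radius), and in fact proves slightly more, namely that the transverse direction is rigid enough that even the index-$2$ refinement is excluded with room to spare ($m^2\le 2$ versus $m^2\ge 4$). Both are valid; yours would survive even if one refused to invoke~\eqref{eqn:boundongamma9and10}.
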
 

\begin{proof} 
Assume that some lattice $L$ with $\min L=\min\Lb$ contains $\Lb$ 
to an index $d\ge 2$. Let us scale for convenience all lattices 
to minimum~$2$. By Proposition~\ref{prop:sumorth}, we have 
$\g(\Lb)\ge\g(\E_8\perp\A_1)=2\cdot 2^{-1/9}$, hence 
\linebreak 
$\g(L)^{9/2}\ge 2 \cdot \g(\E_8\perp\A_1)^{9/2}=32$, 
which contradicts the upper bound~\eqref{eqn:boundongamma9and10}. 
\end{proof}

\section{An algorithm to check the existence of a code}  \label{sec:algorithm} 

The basic idea of the following algorithm is motivated by
the geometric situation described in the proof of Proposition~\ref{prop:smallclass}.
Given a $\Z/d\Z$-code $C$, we either show that the 
intersection of $T_C$ (as in~\eqref{eqn:TC}) with $\cR$ is empty,
or we show that it is non-empty by finding a corresponding Gram matrix.
A problem we have to deal with is the fact that $\cR$ is given by infinitely many inequalities.
The idea is to start with a finite set of inequalities and
then successively add inequalities until either we find a point in 
the intersection or have a proof for infeasibility.
For the starting set of inequalities we 
take a finite set of vectors $V\subset \Z^n$
such that the linear function $\Tr G = \langle \Id_n, G \rangle $ is bounded from above
on the polyhedron 
\begin{equation}  \label{eqn:feasregion}
P = \{ G \in T_C \; | \; G[v]\geq 1 \mbox{ for all $v\in V$ } \}
.
\end{equation}
Note, if $P$ is empty, we have a proof that the minimal class $\cC_C$ is empty.
The assumption on the bounded trace
allows us to find a solution of the {\em linear programming problem} 
\begin{equation}  \label{eqn:LP}
\max_{Q\in P} \Tr Q
.
\end{equation}
Depending on whether or not the found solution $G$ of this linear program 
is positive definite or not we have a different strategy for obtaining 
additional inequalities, respectively vectors for the description of $P$.
In the first case we compute the minimum of $G$. If it is $1$, the Gram
matrix~$G$ proves the existence of the minimal class $\cC_C$ and corresponding lattices.
If the minimum is less than $1$ we add $S(G)$ to $V$.
In the second case, if $G$ is not positive definite, we add some
vector(s) $v\in \Z^n$ to $V$ with $G[v]\leq 0$.
Such vectors can be found for example by an eigenvector computation.
Having enlarged $V$, we can go back and solve the linear program \eqref{eqn:LP},
now with respect to a smaller polyhedron~$P$. Again, if~$P$ is non-empty,
we obtain a new solution $G$ and proceed as described above.
See Algorithm~\ref{alg:index-algorithm} for a schematic description
of the described procedure. Note that all of the steps can be realized 
with the help of a Computer Algebra System. 
We used {\tt MAGMA}~\cite{magma} for our computations, 
in conjunction with {\tt lrs}~\cite{lrs} to perform polyhedral computations.

\begin{bigalg}   \label{alg:index-algorithm}
\fbox{
\begin{minipage}{12.0cm}
\begin{flushleft}
\smallskip
\textbf{Input:} $n$, $d$, $C = \{ a^{(i)} \in (\Z/d\Z)^n \; | \; i=1,\ldots,k \}$\\ 
\textbf{Output:} (``true'' and a corresponding Gram matrix $G$)\\ 
\hspace*{1.2cm} or (``false'' and $V\subset\Z^n$ such that $P$ in \eqref{eqn:feasregion} is empty)\\                  
\bigskip

$V =$ initial set of integral, non-zero vectors\\
{\bf do}\\
\hspace*{0.5cm} $P = \{G \in T_C : G[v]\geq 1 \mbox{ for all $v\in V$ } \}$\\
\hspace*{0.5cm} {\bf if} $P=\emptyset$\\ 
\hspace*{0.5cm} {\bf then}\\ 
\hspace*{1.0cm} {\bf return} (``false'', $V$)\\
\hspace*{0.5cm} {\bf else}\\
\hspace*{1.0cm} determine $G\in P$ with $\displaystyle \Tr G = \max_{G'\in P} \Tr G'$\\
\hspace*{1.0cm} {\bf if} $G\in\sno$\\ 
\hspace*{1.0cm} {\bf then}\\ 
\hspace*{1.5cm} {\bf if} $\min (G) \geq 1$\\
\hspace*{1.5cm} {\bf then}\\  
\hspace*{2.0cm} {\bf return} (``true'', $G$)\\
\hspace*{1.5cm} {\bf else}\\ 
\hspace*{2.0cm} $V=V\cup S(G)$\\
\hspace*{1.5cm} {\bf end if}\\ 
\hspace*{1.0cm} {\bf else}\\  
\hspace*{1.5cm}  compute finite $\mbox{NV}\subset \Z^n$ with $G[v]\leq 0$ for $v\in \mbox{NV}$\\
\hspace*{1.5cm}  $V=V\cup \mbox{NV}$\\ 
\hspace*{1.0cm} {\bf end if}\\ 
\hspace*{0.5cm} {\bf end if}\\  
{\bf end do}

\end{flushleft}
\end{minipage}
}
\\[1ex]
{\bf Algorithm \arabic{alg}.} Determines feasibility of a given code C
\end{bigalg}

It is not {\em a priori} clear that this procedure is in fact an algorithm, that is, 
if it stops after finitely many steps. As long as it does in all cases we consider,
we may not even care. In order to guarantee that the computation finishes after finitely 
many steps, depending on $T$, we can restrict the vectors to be added to $V$
to some large finite subset of $\Z^n$, for example by bounding the absolute 
value of coordinates. An explicit bound for the coordinates
of vectors $x\in \Z^n$, with $G[x] = 1$ for $G$ in $\cR$ with $\Tr G$ bounded by some constant, 
is derived in~\cite[Section~3.1]{schuermann-2008}.

Algorithm~\ref{alg:index-algorithm} yields a vertex $G$ of 
the face $\cF_C = T_C \cap \cR$ of $\cR$, associated with the code $C$
through the choice of specific coordinates $\bar{e}^{(i)}$
(see the proof of Proposition~\ref{prop:smallclass}).
If we want to know a description of the whole face $\cF_C$,
we can compute all of its vertices by 
exploring neighboring vertices of vertices found so far,
until no new vertices are discovered. As the face $\cF_C$ is bounded 
this traversal search on the graph 
of vertices and edges (one-dimensional faces) of $\cF_C$ ends after 
finitely many steps.
Given a vertex $G$, the neighboring vertices are found as follows.
We consider the polyhedral cone 
\begin{equation}  \label{eqn:Gcone}
\{
G' \in T_C 
\; | \;
G'[x] \geq 1 \mbox{ for all } x\in S(G)
\}
\end{equation}
with apex $G$. Thus the elements of $S(G)$ yield a polyhedral description 
with linear inequalities.
Using standard methods (cf. for example \cite[Appendix A]{schuermann-2008}),
we can convert it into a description
$$
\{
G' \in \sn
\; | \;
G' = G + \lambda_1 R_1 + \ldots + \lambda_k R_k, \lambda_i \geq 0
\}
$$
with {\em extreme rays} given by generators $R_1,\ldots, R_k\in \sn\setminus\{0\}$.
For each of these generators $R_i$ we can find a neighboring vertex 
of $G$ in $\cF_C$ by a procedure similar to the one of finding
contiguous perfect forms (cf. \cite[Section~3.1]{schuermann-2008}).
See Algorithm~\ref{alg:determination-of-contiguous-vertex}.

\begin{bigalg}   \label{alg:determination-of-contiguous-vertex}
\fbox{
\begin{minipage}{12cm}
\begin{flushleft}
\smallskip
\textbf{Input:} Vertex $G$ of $\cF_C$ and generator $R$ of an extreme ray of \eqref{eqn:Gcone}\\
\textbf{Output:} $\rho>0$ with $\min(G + \rho R) = \min(G)$ and $S(G + \rho R) \not \subseteq S(G)$.\\
\medskip
$(l, u) \leftarrow (0,1)$\\
\smallskip
\textbf{while} $G + u R \not\in \sno$ or $\min(G + u R) = \min(G)$ \textbf{do}\\
\hspace{2ex} \textbf{if} $G + u R \not\in \sno$ \textbf{then} $u \leftarrow (l + u)/2$\\
\hspace{2ex} \textbf{else} $(l,u) \leftarrow (u, 2u)$\\
\hspace{2ex} \textbf{end if}\\
\textbf{end while}\\
\smallskip
\textbf{while} $S(G + l R) \subseteq S(G)$ \textbf{do}\\
\hspace{2ex} $\gamma \leftarrow \frac{l + u}{2}$\\
\hspace{2ex} \textbf{if} $\min (G + \gamma R) \geq \min(G)$ \textbf{then} $l \leftarrow \gamma$\\
\hspace{2ex} \textbf{else} \\
\hspace{4ex} $u \leftarrow \min \left\{ \displaystyle \frac{\min(G)-G[x]}{R[x]} \; | \;
                             x\in S (G+\gamma R), R[x] < 0 \right\} \cup \{ \gamma \} $\\
\hspace{2ex} \textbf{end if}\\
\hspace{2ex} \textbf{if} $\min (G + \gamma R) = \min(G)$ \textbf{then} $l \leftarrow u$ \textbf{end if}\\
\textbf{end while}\\
\smallskip
$\rho \leftarrow l$
\end{flushleft}
\end{minipage} 
}
\\[1ex]
{\bf Algorithm \arabic{alg}.} Determination of neighboring vertices of $\cF_C$.
\end{bigalg}

Once we know all the vertices of $\cF_C$, we can easily compute a relative interior point
that carries information on several invariants of the class $\cC_C$, 
like its perfection rank $r$ and the number $s$ of minimal vectors of $\Lb$.
To obtain just any interior point, it is actually enough to know
an initial vertex and the generating rays of the polyhedral cone \eqref{eqn:Gcone}.
If we know all the vertices $G_1,\ldots, G_k$ of $\cF_C$, 
we can compute the vertex barycenter $\frac{1}{k}\sum_{i=1}^k G_i$ of $\cF_C$
that carries even more information. For example its automorphism group is equal
to the automorphism group 
$$
\Aut \cF_C
=
\{
U\in\GL_n(\Z)
\; | \;
U^t \cF_C U = \cF_C
\}
$$ 
of $\cF_C$. This is due to the fact that any automorphism of the face $\cF_C$ permutes its
vertices, and hence leaves the vertex barycenter fixed. On the other hand, 
the vertex barycenter is a relative interior point of~$\cF_C$, that is a face of the
Ryshkov polyhedron on which any element of $\GL_n(\Z)$ acts. Therefore, 
for topological reasons, any automorphism of the vertex barycenter has 
to be an automorphism of $\cF_C$.

In higher dimensions, i.e. for $n=9$, 
depending on the face $\cF_C$,
the polyhedral computations necessary to find an initial vertex~$G$ or even
all vertices may not be feasible (within a reasonable amount of time).
In these cases, we can try to exploit available symmetries,
that is, use the group $\Aut \cF_C$. 
It can be computed from the 
coordinate vectors $\bar{e}^{(i)}$ 
which define the linear space $T_C$ (see~\eqref{eqn:TC}).
In fact, 
$$
\Aut \cF_C =
\{
U\in\GL_n(\Z)
\; | \;
U\bar{e}^{i} \in \{ \bar{e}^{(1)},\ldots, \bar{e}^{(n)} \} 
\mbox{ for all } i = 1,\ldots, n
\}
.
$$
As at least the vertex barycenter of $\cF_C$ is 
invariant with respect to~$\cG=\Aut \cF_C$,
it is contained in the {\em $\cG$-invariant linear subspace} 
$$
T_{\cG} = \{
G\in \sn \; | \;
U^t G U = G \mbox{ for all } U \in \GL_n(\Z)
\}
.
$$
So if we just want to check the feasibility of a given code $C$
and want to compute its invariants from the vertex barycenter, 
then we can restrict the search to the linear space $T_{\cG}$, 
respectively to the affine space $T_{\cG}\cap T_C$.
In practice, in many cases the computation time is reduced 
tremendously by this kind of symmetry reduction.

Note that $T_{\cG} \cap \cR$, like $\cR$ itself, is a locally finite polyhedron.
Its vertices (and corresponding lattices) are called {\em $\cG$-perfect}.
We refer to \cite{schuermann-2009} for a detailed account and interesting examples.
If there is only one Gram matrix up to scaling 
in $T_{\cG}\cap T_C\cap \cR$ it is {\em $\cG$-eutactic}
and therefore {\em eutactic} (see~\cite{martinet-2003} for details).
By the discussion at the end of Section~\ref{sec:quadforms},
we can then conclude that the minimum of $\gamma$ for the minimal class $\cC_C$
is attained on it.

\section{Restricting the number of codes under consideration}\label{sec:restrictnumberofcodes} 

\smallskip 

The computations proposed in the last sections are quite involved,
so it is desirable to {\em a priori} exclude as many cases as possible.
An efficient basic tool to restrict the number of possible codes  
is the following identity.

\begin{prop}{\rm(Watson, \cite{watson-1971}.)}\label{propwatid} 
Let $e_1,\dots,e_n$ be independent vectors in~$E$, 
let $a_1,\dots,a_n$ and $d\ge 2$ be integers, 
and let 
$$f=\frac{a_1 e_1+\dots+a_n e_n}d\,.$$ 
Denote by $\sgn(x)$ the sign of the real number~$x$. Then, 
$$\left( \left(\sum_{i=1}^n\abs{a_i} \right)-2d\right)N(f)=
\sum_{i=1}^n\abs{a_i}\left(N \left(f-\sgn(a_i)e_i \right)-N(e_i)\right)\,.$$
\end{prop}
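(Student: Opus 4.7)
The plan is to expand everything on the right-hand side using the bilinearity of the scalar product and to recognize $f$ itself inside the resulting sum.

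First, I would apply the polarization identity. For each $i$, since $N$ is the quadratic form associated with $x\cdot y$, one has
\[
N(f - \sgn(a_i) e_i) = N(f) - 2\sgn(a_i)\,(f\cdot e_i) + N(e_i),
\]
where I am using that $\sgn(a_i)^2 = 1$ provided $a_i\neq 0$ (the terms with $a_i=0$ contribute zero to both sides of the claimed identity, so I can discard them at the outset or simply declare $\sgn(0)$ arbitrarily without affecting the right-hand side). Consequently
\[
N(f - \sgn(a_i) e_i) - N(e_i) = N(f) - 2\sgn(a_i)\,(f\cdot e_i).
\]

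Next I would multiply by $|a_i|$ and sum over $i$. The key simplification is the identity $|a_i|\,\sgn(a_i) = a_i$, which converts the inner products into a single scalar product with $\sum_i a_i e_i$:
\[
\sum_{i=1}^n |a_i|\bigl(N(f - \sgn(a_i) e_i) - N(e_i)\bigr)
= \Bigl(\sum_{i=1}^n |a_i|\Bigr) N(f) - 2\, f\cdot\Bigl(\sum_{i=1}^n a_i e_i\Bigr).
\]

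Finally, I would substitute the defining relation $\sum_{i=1}^n a_i e_i = d f$, so that $f\cdot(\sum_i a_i e_i) = d\,N(f)$. This collapses the right-hand side to
\[
\Bigl(\sum_{i=1}^n |a_i|\Bigr) N(f) - 2d\,N(f) = \Bigl(\sum_{i=1}^n |a_i| - 2d\Bigr) N(f),
\]
which is exactly the left-hand side of Watson's identity. There is no genuine obstacle here: the proof is a two-line algebraic manipulation whose only subtlety is the bookkeeping with $\sgn(a_i)$, and everything reduces to the observation $|a_i|\sgn(a_i)=a_i$ together with the defining relation $df = \sum_i a_i e_i$. The independence of the $e_i$ is never used, so the identity actually holds for arbitrary vectors in $E$.
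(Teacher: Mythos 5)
Your proof is correct and is exactly what the paper means by its one-line proof (``Just develop both sides of the displayed formula''): expand $N(f-\sgn(a_i)e_i)$ by bilinearity, use $|a_i|\sgn(a_i)=a_i$, and substitute $\sum_i a_i e_i = df$. Your side remark that the independence of the $e_i$ is never needed is also accurate.
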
 

\begin{proof} 
Just develop both sides of the displayed formula. 
\end{proof}

\begin{corol}\label{corwatid}{\rm(Watson, \cite{watson-1971}.)} 
With the notation above, assume that the $e_i$ are minimal vectors 
of a lattice $\Lb$, that $f$ belongs to $\Lb$ and that the $a_i$ 
are non-zero. Then we have 
$$\sum_i\,\abs{a_i}\ge 2d\,,$$ 
and equality holds \ifff the $n$~vectors $e'_i=f-e_i$ are minimal. 
\end{corol}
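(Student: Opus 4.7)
The plan is to feed the identity of Proposition~\ref{propwatid} directly and read off both the inequality and the equality case from the signs of each term. First I would observe that, by replacing $e_i$ with $-e_i$ whenever necessary (a harmless operation since minimal vectors of $\Lb$ come in $\pm$ pairs), we may assume $a_i>0$ for all $i$. Then $\sgn(a_i)=1$, so the right-hand side of Proposition~\ref{propwatid} simplifies to
$$
\sum_{i=1}^n |a_i|\bigl(N(f-e_i)-N(e_i)\bigr),
$$
and the statement to prove becomes exactly the condition for the $e'_i=f-e_i$ to be minimal.

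Next I would establish that every term on the right-hand side is non-negative. Since $f\in\Lb$ and $e_i\in\Lb$, the vector $f-e_i$ lies in $\Lb$. It is non-zero: if $f-e_i=0$ for some $i$, then the linear independence of $e_1,\dots,e_n$ and $d\ge 2$ forces $a_j=0$ for $j\ne i$, contradicting the hypothesis that all $a_i$ are non-zero. Consequently $N(f-e_i)\ge\min\Lb=N(e_i)$ by minimality of the $e_i$, so each summand $|a_i|(N(f-e_i)-N(e_i))$ is non-negative.

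For the left-hand side I would note that $f\ne 0$ by the same linear-independence argument, so $N(f)>0$. Dividing the identity by $N(f)$, the sign of $\sum_i|a_i|-2d$ matches the sign of the right-hand side, giving the inequality $\sum_i|a_i|\ge 2d$ at once. Finally, equality $\sum_i|a_i|=2d$ forces the non-negative right-hand sum to vanish; since each $|a_i|>0$, this happens if and only if $N(f-e_i)=N(e_i)=\min\Lb$ for every $i$, i.e.\ precisely when each $e'_i=f-e_i$ is a minimal vector. I do not expect any serious obstacle here; the only subtlety is the preliminary sign normalization on the $a_i$ and the mild verification that $f$ and the $f-\sgn(a_i)e_i$ are non-zero, both of which follow from the linear independence of the $e_i$ together with $a_i\ne 0$ and $d\ge 2$.
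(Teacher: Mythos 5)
Your proof is correct and is exactly the intended deduction: the paper states the corollary as an immediate consequence of Proposition~\ref{propwatid} without writing out the details, and your argument (sign normalization, non-vanishing of $f$ and $f-e_i$ via linear independence, non-negativity of each summand by minimality of the $e_i$, and the equality case forcing each term to vanish) supplies precisely those details. No gaps.
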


When adding vectors $f$ as above to the lattice 
$\Lb'=\la e_1,\dots,e_n\ra$, one may always reduce the $a_i$ 
modulo~$d$. When there is only one such vector, i.e. when we may write 
$\Lb=\la\Lb',f\ra$, then by negating some $e_i$ if needed, we may 
moreover assume that all $a_i$ are non-negative. By reducing 
the dimension, we even may assume they are strictly positive. In this case, 
we adopt the following notation:

\begin{notation}\label{notacyclic}{\rm 
Suppose that $\Lb/\Lb'$ is cyclic of order $d\ge 2$, and that 
$$\Lb=\la\Lb',f\ra\ \text{ with }\ 
f=\frac{a_1 e_1+\dots+a_n e_n}d$$ 
and $a_i\in\{\pm 1,\dots,\pm\lf\frac d2\rf\}$. 
For $i=1,\ldots,\lf\frac d2\rf$ we then set 
$$
m_i=\abs{\{a_j\mid a_j=\pm i\}}
$$ 
and say that $\Lb$ {\em is of type} $(m_1,\dots,m_{\lfloor d/2\rfloor})_d$, 
or simply $(m_1,\dots,m_{\lfloor d/2\rfloor})$. 
}
\end{notation}

Note, when we use this notation, we have $m_1+\dots+m_{\lfloor d/2\rfloor }=n$.  
It will be generally assumed that $d$ and the $a_i$ are coprime,
because otherwise, we could replace $d$ by one of its strict divisors.

It should be noted that we also have $\Lb=\la\Lb',a\,f\ra$ 
for any $a$ coprime to~$d$. This induces an action of 
$(\Z/d\Z)^\times/\{\pm 1\}$ on the set of admissible types 
$(m_1,\dots,m_{\lfloor d/2\rfloor})_d$. If one of the types in an orbit
does not satisfy Watson's criterion in Corollary~\ref{corwatid},
we know that a corresponding code does not exist.

%
%
%

\section{Classifying cyclic quotients}\label{sec:cyclic}

In this section we give complete results on cyclic quotients for dimension~$9$. 
The results are displayed in Table~\ref{tab:cyclic},
with coordinates $(a_1,\ldots,a_9)$ of a generator,
together with three basic invariants of lattices in the corresponding 
minimal class (see Proposition~\ref{prop:smallclass}):
$s=s(\Lb)$, $r=\perf(\Lb)$ and $s'=s(\Lb')$.
Note that we list only one admissible type $(m_1,\dots,m_{\lfloor d/2\rfloor})$ 
of each $(\Z/d\Z)^\times/\{\pm 1\}$ orbit,
as explained at the end of the previous section.

Our results show that cyclic quotients exist for $n=9$ only with $d\leq 10$ and $d=12$.
They were obtained using an implementation of Algorithm~\ref{alg:index-algorithm},
using {\tt MAGMA} scripts in conjunction with {\tt lrs}.
Our source code can be obtained from the online appendix of this paper,
contained in the source files of its arXiv version {\tt arXiv:0904.3110}.
We used a {\tt C++} program to systematically generate a list of possible cases
satisfying the conditions of Watson, described in Section~\ref{sec:restrictnumberofcodes}.
We checked all cases $d\leq 30$, left by Proposition~\ref{propmaxind} 
and the known bound~\eqref{eqn:boundongamma9and10} on $\gamma_9$.
In this way, we also confirmed all of the previously known results for $n\leq 8$ in~\cite{martinet-2001}.
In dimension~$9$, we found several new possible indices. Below we exemplary 
give a detailed account of our computational result for $d = 12$.
For $d\leq 6$ we give a derivation.

\subsection{Cyclic cases with $\mathbf{d = 12}$}

\label{sec:cyclicd12}

Among the most interesting
cases are the cyclic quotients with~$d=12$. 
There are four different types listed in Table~\ref{tab:cyclic}.
Three of them occur only for the laminated lattice~$\Lb_9$;
it is the unique lattice in dimension~$9$ with~$s=136$.
The fourth entry shows that there is also one type
that occurs for some lattices with~$s\geq 87$.
One of them with~$s=87$ is the lattice~$L_{87}$  
with Gram matrix
\begin{small}
\begin{equation*}   
\begin{pmatrix}
   4  & 2  &  -2  &  2  &  -2  &   0  & 2 & -2  &  2  \\
 2  &  4  & -1  &  0  & -2  &  2  &   0  &  0  & 2 \\
-2 & -1  &   4  & -2 &  2  & -1  & -1 &  1 & -2 \\
 2  &   0  & -2  &   4  &    0  &    0 &  2 & -1 &  2 \\
-2 & -2 &  2 &   0 &   4 & -2 &   0 &  1 & -2 \\
   0  & 2 & -1 &   0 & -2  &  4  &  0  & 1 & 2 \\
 2  &  0 & -1 &  2 &   0  &   0  &   4  & -2  &   0 \\
-2  &  0 & 1  & -1 &  1  & 1 & -2  &  4  &  0 \\
 2  & 2 & -2 & 2  & -2 &  2 &   0 &   0  &   4 \\
\end{pmatrix}
.
\end{equation*}
\end{small}

The perfection rank of $L_{87}$ and its Gram matrix~$G$ is~$42$. 
Hence, by the discussion in Section~\ref{sec:quadforms}, 
it is in the relative interior of a three-dimensional
face of the Ryshkov polyhedron. A closer analysis reveals that this face is an 
octahedron with centroid~$G$. Its six vertices come in opposite
pairs. Two of these pairs contain Gram matrices of~$\Lb_9$.
They are obtained as $G \pm R$ and $G\pm R'$,
with $R$ and $R'$ being symmetric and having entries~$0$ everywhere, 
except at the positions determined by the conditions 
$R_{32} = R_{36}=R_{37}=1$ and $R'_{84} = R'_{85}=-R'_{86}=1$ (together with the symmetric ones).
The other pair $G \pm R''$, with $R''_{38}=R''_{83}=1$ and $0$ elsewhere,
contains two Gram matrices of another perfect
lattice with $s=99$. We call this special perfect lattice~$L_{99}$ in the sequel. 
It is characterized by the fact that it is the only perfect lattice aside of~$\Lb_9$
that has a Minkowskian sublattice with cyclic quotient of order~$d=12$.
Note that any Gram matrix of a lattice with cyclic quotient of order~$d=12$ 
can be obtained as a convex combination of suitable Gram matrices
of~$L_{99}$ and~$\Lb_9$.

A computer assisted calculation shows that both lattices~$L_{87}$ and~$L_{99}$
are eutactic but not strongly eutactic (see~\eqref{eqn:eutaxy}).
For example, for a Gram matrix $G$ of~$L_{99}$ we compute (using {\tt MAGMA}) the set of minimal vectors
$S(G)\subset \Z^n$ and find that it falls into five orbits under the action of the
automorphism group of~$G$. Each orbit $O$ yields a barycenter $\sum_{x\in O} x x^t$
and the so obtained five barycenters $b_1,\ldots,b_5$ satisfy a relation
$G^{-1} = \lb_1 b_1 + \ldots + \lb_5 b_5$ for positive coefficients $\lb_i$,
as can be checked easily for example with the Maple package {\tt Convex}~\cite{convex}.
See the comments in the LaTeX source file of the arXiv paper  {\tt arXiv:0904.3110} for further details.
For the perfect lattice~$L_{99}$, its eutaxy implies (by a theorem of Voronoi; see~\cite{martinet-2003})
that it is extreme, that is, it attains a local maximum of the Hermite invariant.

\begin{table}

\noindent
\begin{minipage}{6cm}

\begin{tabular}{|c|c|c|c|c|}
\hline
$d$  &  generator  & $s$  &  $r$  &  $s'$  \\
\hline
\hline
 2  & (1,1,1,1,1,1,1,1,1)  &  9  &  9  &  9 \\
 3  & (1,1,1,1,1,1,1,1,1)  &  9  &  9  &  9 \\
 4  & (1,1,1,1,1,1,1,1,1)  &  9  &  9  &  9 \\
 4  & (1,1,1,1,1,1,1,1,2)  &  9  &  9  &  9 \\
 4  & (1,1,1,1,1,1,1,2,2)  &  9  &  9  &  9 \\
 4  & (1,1,1,1,1,1,2,2,2)  &  9  &  9  &  9 \\
 4  & (1,1,1,1,1,2,2,2,2)  &  9  &  9  &  9 \\
 4  & (1,1,1,1,2,2,2,2,2)  &  17  &  15  &  9 \\
 5  & (1,1,1,1,1,1,1,1,2)  &  18  &  17  &  9 \\
 5  & (1,1,1,1,1,1,1,2,2)  &  9  &  9  &  9 \\
 5  & (1,1,1,1,1,1,2,2,2)  &  9  &  9  &  9 \\
 5  & (1,1,1,1,1,2,2,2,2)  &  9  &  9  &  9 \\
 6  & (1,1,1,1,1,1,2,2,2)  &  18  &  17  &  9 \\
 6  & (1,1,1,1,1,2,2,2,2)  &  9  &  9  &  9 \\
 6  & (1,1,1,1,2,2,2,2,2)  &  23  &  20  &  9 \\
 6  & (1,1,1,1,1,1,1,2,3)  &  27  &  25  &  9 \\
 6  & (1,1,1,1,1,1,2,2,3)  &  9  &  9  &  9 \\
 6  & (1,1,1,1,1,2,2,2,3)  &  9  &  9  &  9 \\
 6  & (1,1,1,1,2,2,2,2,3)  &  9  &  9  &  9 \\
 6  & (1,1,1,2,2,2,2,2,3)  &  17  &  15  &  9 \\
 6  & (1,1,1,1,1,1,1,3,3)  &  9  &  9  &  9 \\
 6  & (1,1,1,1,1,1,2,3,3)  &  9  &  9  &  9 \\
 6  & (1,1,1,1,1,2,2,3,3)  &  9  &  9  &  9 \\
 6  & (1,1,1,1,2,2,2,3,3)  &  9  &  9  &  9 \\
 6  & (1,1,1,2,2,2,2,3,3)  &  9  &  9  &  9 \\
 6  & (1,1,2,2,2,2,2,3,3)  &  17  &  15  &  9 \\
 6  & (1,1,1,1,1,1,3,3,3)  &  15  &  14  &  9 \\
 6  & (1,1,1,1,1,2,3,3,3)  &  23  &  20  &  9 \\
 6  & (1,1,1,1,2,2,3,3,3)  &  15  &  14  &  9 \\
 6  & (1,1,1,2,2,2,3,3,3)  &  15  &  14  &  9 \\
 6  & (1,1,2,2,2,2,3,3,3)  &  15  &  14  &  9 \\
 6  & (1,2,2,2,2,2,3,3,3)  &  23  &  20  &  9 \\
7  &  (1,1,1,1,1,2,2,2,3)  &  33  &  31  &  9 \\  
7  &  (1,1,1,1,2,2,2,2,3)  &  18  &  17  &  9 \\ 
7  &  (1,1,1,1,1,1,2,3,3)  &  136  &  45  &  24 \\ 
7  &  (1,1,1,1,1,2,2,3,3)  &  9  &  9  &  9 \\ 
7  &  (1,1,1,1,2,2,2,3,3)  &  9  &  9  &  9 \\ 
7  &  (1,1,1,1,1,2,3,3,3)  &  30  &  26  &  9 \\ 
7  &  (1,1,1,1,2,2,3,3,3)  &  9  &  9  &  9 \\ 
\hline
\end{tabular}

\end{minipage}
\hfill
\begin{minipage}{6cm}

\begin{tabular}{|c|c|c|c|c|}
\hline
$d$  &  generator  & $s$  &  $r$  &  $s'$  \\
\hline
\hline
7  &  (1,1,1,2,2,2,3,3,3)  &  18  &  17  &  9 \\ 
8  &  (1,1,1,1,2,2,2,3,3)  &  136  &  45  &  18 \\ 
8  &  (1,1,1,2,2,2,2,3,3)  &  9  &  9  &  9 \\ 
8  &  (1,1,2,2,2,2,2,3,3)  &  35  &  28  &  9 \\ 
8  &  (1,1,1,2,2,2,3,3,3)  &  50  &  37  &  12 \\ 
8  &  (1,1,1,1,1,2,2,3,4)  &  136  &  45  &  19 \\ 
8  &  (1,1,1,1,2,2,2,3,4)  &  40  &  34  &  9 \\ 
8  &  (1,1,1,2,2,2,2,3,4)  &  37  &  32  &  9 \\ 
8  &  (1,1,1,1,2,2,3,3,4)  &  25  &  24  &  9 \\ 
8  &  (1,1,1,2,2,2,3,3,4)  &  9  &  9  &  9 \\ 
8  &  (1,1,2,2,2,2,3,3,4)  &  17  &  15  &  9 \\ 
8  &  (1,1,1,1,2,3,3,3,4)  &  31  &  29  &  9 \\ 
8  &  (1,1,1,2,2,3,3,3,4)  &  27  &  25  &  9 \\ 
8  &  (1,1,1,1,3,3,3,3,4)  &  34  &  30  &  9 \\ 
8  &  (1,1,1,1,2,2,3,4,4)  &  32  &  28  &  9 \\ 
8  &  (1,1,1,2,2,2,3,4,4)  &  38  &  29  &  9 \\ 
8  &  (1,1,1,1,2,3,3,4,4)  &  42  &  34  &  10 \\ 
8  &  (1,1,1,2,2,3,3,4,4)  &  9  &  9  &  9 \\ 
8  &  (1,1,2,2,2,3,3,4,4)  &  33  &  27  &  9 \\ 
8  &  (1,1,1,2,3,3,3,4,4)  &  23  &  21  &  9 \\ 
 9  &  (1,1,1,2,2,2,3,3,4)  &  84  &  43  &  13 \\  
 9  &  (1,1,1,2,2,3,3,3,4)  &  50  &  37  &  10 \\  
 9  &  (1,1,1,1,2,3,3,4,4)  &  136  &  45  &  16 \\  
 9  &  (1,1,1,2,2,3,3,4,4)  &  53  &  37  &  10 \\  
 9  &  (1,1,1,2,3,3,3,4,4)  &  31  &  27  &  9 \\  
 9  &  (1,1,2,2,3,3,3,4,4)  &  15  &  14  &  9 \\  
 10  & (1,1,2,2,2,2,3,3,5)  &  136  &  45  &  16 \\  
 10  & (1,1,2,2,2,2,3,4,5)  &  136  &  45  &  16 \\  
 10  & (1,1,2,2,2,3,3,4,5)  &  64  &  40  &  10 \\  
 10  & (1,1,1,2,2,3,4,4,5)  &  136  &  45  &  13 \\  
 10  & (1,1,2,2,2,3,4,4,5)  &  51  &  36  &  9 \\  
 10  & (1,1,2,2,3,3,4,4,5)  &  43  &  39  &  9 \\  
 10  & (1,1,1,2,3,4,4,4,5)  &  84  &  43  &  12 \\  
 10  & (1,1,2,2,3,4,4,4,5)  &  53  &  37  &  9 \\  
 12  & (1,1,2,3,3,4,4,5,6)  &  136  &  45  &  12 \\ 
 12  & (1,1,3,3,4,4,5,5,6)  &  136  &  45  &  12 \\  
 12  & (1,2,2,3,3,3,4,4,5)  &  136  &  45  &  13 \\ 
 12  & (1,2,2,3,3,4,4,5,6)  &  87  &  42  &  10 \\ 
       & & & & \\
\hline
\end{tabular}

\end{minipage}

\medskip
\caption{Cyclic cases for $n=9$}
\label{tab:cyclic}
\end{table}

\subsection{Cyclic cases with $\mathbf{d\leq 6}$}

We give arguments for the $9$-dimensional cases with $d\leq 6$ below.
Actually, we shall see that it is possible to give complete results 
for all dimensions with little extra work once we know the results up 
to $n=8$ for $\imath\le 5$, and up to $n=9$ for $\imath = 6$. 
The general strategy to deal with cyclic quotients of order 
$\imath=d=3$ to~$6$ is as follows 
(the notation is that of \ref{notacyclic}): taking into account 
Corollary~\ref{corwatid} and the action of $(\Z/5\Z)^\times$ 
which allows us to exchange $m_1$ and $m_2$ when $d=5$, 
we obtain the inequalities 
$n=m_1\ge 6$ for $d=3$, $m_1\ge 4$ and $m_1+2m_2\ge 8$ for $d=4$, 
$m_1\ge \frac n2$ and $m_1+2m_2\ge 10$ for $d=5$ 
(and $n\ge 8$ is known; see \cite{martinet-2001}), and finally $m_1+m_2\ge 4$, $m_1+m_3\ge 6$ 
and $m_1+2m_2+3m_3\ge 12$ for $d=6$.

As a next step we apply the following averaging argument, justified by the 
discussion on symmetry at the end of Section~\ref{sec:algorithm}:

\begin{remark} \label{rem:averaging}
Assuming a lattice of type $(m_1,\dots,m_{d/2})_d$ exists, 
with the notation of \ref{notacyclic}, 
we may assume that the scalar products
of minimum vectors $e_k$ and $e_l$ associated with $m_i$ and $m_j$ 
are equal. For $i=j$, that is, for $e_k$ and $e_l$ associated with the
same $m_i$, we denote this scalar product by $x_i$;
for $i\not = j$, we denote it by $y_{i,j}$.
These parameters are omitted if $m_i=0,1$, respectively $m_j=0,1$,
except of $y_{i,j}$ in the case $m_i=m_j=1$.
For even~$d$, we can additionally set $x_{d/2}=y_{d/2,j}=0$, 
as we can average the two Gram matrices
resulting from a base change, which replaces only $e_k$ by $-e_k$.
\end{remark}

By this kind of averaging argument, we assume that Gram matrices 
for a lattice type depend only on a short list of parameters.
In particular, only on 
$x_1$ if $d=3$ or $d=4$ (and then $x_1=0$ if $m_1=4$), 
at most three parameters $x_1,x_2,y_{1,2}$ if $d=5$ or $d=6$. 
In practice, for large enough $m_i$, the existence and the equalities 
$s=r=n$ hold taking pairwise orthogonal vectors~$e_i$, 
so that a finite number of verifications will suffice, 
which need difficult arguments only in low dimensions. 

\smallskip 

\noi\underbar{Index $2$.}
For $\imath=2$, there is one lattice, namely $\Lb=\la\Lb',f\ra$ 
where $f=\frac{e_1+\dots+e_n}2$, which can be constructed 
for $n\ge 4$ using an orthogonal basis for $\Lb'$.  
We have $(s,r)=(12,10)$ and $\Lb\sim\D_4$ if $n=4$. 
For $n\ge 5$ we get $s=r=n$. 

\smallskip 

\noi\underbar{Index $3$.} 
Here, we have $\Lb=\la\Lb',f\ra$ and $f=\frac{e_1+\dots+e_n}3$. 
By Corollary~\ref{corwatid}, we must have $n\ge 6$ and $s\ge 12$ 
when $n=6$ (because the vectors $e_i$ and $e'_i=f-e_i$ are minimal; 
this also shows that the index system 
of $S_6=\{e_i,e'_i : i=1,\ldots,6 \}$ is $\cI(S_6)=\{1,2,3\}$), 
and~\cite[Proposition~3.5]{bm-2009} shows that $\perf(S_6)=11$. 
The result is: 
{\em
Lattices exist \ifff $n\ge 6$, and we have $(s,r)=(12,11)$ 
if $n=6$, $s=r=n$ if $n\ge 7$.
} 
For the proof, it suffices to find convenient values 
for the common value $x_1$ of scalar products $e_i\cdot e_j$. 
We may clearly choose $x_1=0$ for $n\ge 10$, 
and we check that for $x_1=\frac 15$, we have $s=12$ if $n=6$ and $s=n$
if $n=7,8,9$. 

\smallskip 

\noi\underbar{Index $4$.} 
Here we have $\Lb=\la\Lb',f\ra$ and 
$f=\frac{e_1+\dots+e_{m_1}+2(e_{m_1+1}+\dots+e_n)}4$. 
The result is: 
{\em 
Lattices exist \ifff $m_1\ge 4$, $n\ge 7$ and $(m_1,m_2)\ne (7,0)$, 
and we have $(s,r)=(n+8,n+6)$ if $m_1=4$ and $s=r=n$ if $m_1\ge 5$, 
except in the following three cases: 
$(s,r)=(23,19)$ if $(m_1,m_2)=(4,3)$, 
$(s,r)=(21,19)$ if $(m_1,m_2)=(6,1)$, 
and $(s,r)=(16,15)$ if 
$(m_1,m_2)=(8,0)$.
} 
For the proof, it suffices to choose $x_1=0$ if $m_1=4$ 
or $m_1+4m_2\ge 17$ and $x_1=1/5$ otherwise. 

\smallskip 

\noi\underbar{Index $5$.} 
Recall that we assume that $m_1\ge\frac n2$. The result is: 
{\em Lattices exist \ifff $n=8$ and $m_1=4,5,6$, or 
$n=9$ and $m_1=5,6,7,8$, or $n\ge 10$ 
and we then have $s=r=n$ except in the four cases 
$(m_1,m_2)=(4,4)$, $(6,2)$, $(8,1)$ and $(10,0)$ where $(s,r)=(2n,2n-1)$.
} 
Watson's conditions together with $n\ge 8$ suffice 
to ensure the existence of lattices, and the special values 
for $(s,r)$ occur exactly when equality holds in Watson's 
Proposition~\ref{propwatid} and an analogue of Zahareva 
(see~\cite[proof of Proposition~9.1]{martinet-2001}).

\smallskip 

\noi\underbar{Index $6$.} 
Here the statement of the result is much more complicated, 
and for the sake of simplicity, we consider separately 
the case of dimension~$8$. 

\noi{\em Lattices exist \ifff $n=8$ and $(m_1,m_2,m_3)$ is one 
of the six sets listed in \cite[Table~11.1]{martinet-2001}; 
see the list below, or} 

\ctl{
{\em $n=9$, $m_1+m_2\ge 6$ and $m_1+m_3\ge 4$}.} 

\noi 
{\em When these conditions are satisfied, one has $(s,r)=(n+6,n+5)$ 
if $m_1+m_2=6$, $(s,r)=(n+8,n+6)$ if $m_1+m_3=4$, and $s=r=n$ 
otherwise, except in the following exceptions, for which we list 
$(m_1,m_2,m_3)$ and $(s,r)$:} 

\begin{itemize}
\item[$n=8$:] $(3,4,1)$: $(31,26)$; $(4,3,1)$: $(27,25)$; 
$(5,2,1)$: $(120,36)$ ($\Lb=\E_8$); $(2,4,2)$: $(28,22)$; 
$(4,2,2)$: $(36,28)$. 

\item[$n=9$:] $(4,5,0)$: $(23,20)$; $(6,3,0)$: $(18,17)$; 
$(7,1,1)$: $(27,25)$; $(1,5,3)$: 
\linebreak 
$(23,20)$; $(5,1,3)$: $(23,20)$. 

\item[$n=10$:] $(8,2,0)$: $(20,19)$; $(9,0,1)$: $(30,28)$. 

\item[$n=11$:] $(10,1,0)$ ,$( 22,21)$. 

\item[$n=12$:] $(12,0)$: $(24,23)$. 

\end{itemize}

%
%
%

\section{Classifying non-cyclic quotients}\label{sec:noncyclic}

In this section we give complete results on non-cyclic quotients for dimension~$9$.
For non-cyclic cases one only needs to consider cases with $d$ being a product of $k\geq 2$ 
numbers $d_1,\ldots, d_k$ that share a common divisor greater $1$. Otherwise we could
reduce the minimal number $k$ of necessary generators.
By Proposition~\ref{propmaxind} together with  
bound~\eqref{eqn:boundongamma9and10} on $\gamma_9$,
we only need to consider products $d\leq 30$.
We shall consider for all dimensions, 
quotients of type $2^k$, $3^k$, and $4\cdot 2^k$, 
which we have been able to compute by hand.
This leaves us with the following list of additional possible cases:
$6\cdot 2$, $8\cdot 2$, $10\cdot 2$, $12\cdot 2$, $14\cdot 2$, $4^2$, $6\cdot 3$,
$6\cdot 2^2$, $5^2$ and $9\cdot 3$.

The case $14\cdot 2$ can be excluded directly from 
our classification of cyclic quotients in Section~\ref{sec:cyclic},
as there exists no cyclic quotient of order~$14$. 
The cyclic quotients of order $12$ occur either on the similarity
class of $\Lb_9$ or on a minimal class containing that of the lattice
that we baptized~$L_{87}$ in Section~\ref{sec:cyclicd12}. 

This lattice is
weakly eutactic (an easily checked linear condition)
so that $\g(L_{87})$ is minimal among all lattices containing
its class, hence among all classes having a cyclic quotient
of order~$12$, since $\g(L_{87})$ is smaller than $\g(\Lb_9)$.
Doubling its density would produce a lattice in dimension~$9$ 
with Hermite invariant $2.16...$,
which contradicts the Cohn-Elkies bound~\eqref{eqn:boundongamma9and10}.

Using massive computer calculations to be explained below, 
we were able to exclude the cases 
$8\cdot 2$, $10\cdot 2$, $6\cdot 3$, $9\cdot 3$, $5 \cdot 5$,
and classifying all possible codes for $6\cdot 2$ and $4^2$.
From the classification of $6\cdot 2$, the last remaining 
case $6\cdot 2^2$ can be excluded, as we shall explain.

For the convenience of the reader, as in the cyclic case,
all of the existing cases are listed in tables
(see Tables~\ref{tab:n9d22-table}, \ref{tab:n9d222-table}, \ref{tab:n9d2222-table},
 \ref{tab:n9d33-table},
 \ref{tab:n9d42-table}, \ref{tab:n9d422-table}, 
 \ref{tab:n9d62-table}, and~\ref{tab:n9d44-table})
with coordinates for the generators,
together with the three basic invariants 
$s=s(\Lb)$, $r=\perf(\Lb)$ and $s'=s(\Lb')$
of lattices $\Lb$ in the corresponding minimal class.

\subsection{2-elementary quotients: preliminary remarks} 
\label{subsecperfelem} 

Consider a lattice $\Lb'$ of minimum~$1$ equipped with a basis 
$(e_1,\dots,e_n)$ of minimal vectors. 
Quotients $\Lb/\Lb'$ which are $2$-elementary are of the form 
$\Lb=\la\Lb',x/2 : x\in C\ra$, where the components of~$x$ modulo~$2$ 
run through the set of words of a binary code~$C$. 
The condition 
$\min\Lb=\min\Lb'\,(=1)$ implies that $C$ is of weight $w\ge 4$ 
(because index~$2$ is impossible in dimensions $1,2,3$) 
and that the scalar products $e_i\cdot e_j$ must be zero 
whenever $i,j$ belong to the support of some word of weight~$4$ 
(because the centered cubic lattice $L\sim\D_4$ is the only 
$4$-dimensional lattice with $\imath(L)=2$). 
Under these conditions, the averaging argument of Remark~\ref{rem:averaging}
shows that we may choose $\Lb'=\Z^n$. 
We denote the unique lattice $\Lb$ obtained in this way by $\Lb_C$. 
Its minimal vectors are the $\pm e_i\in\Lb'$ and the vectors 
of the form $\frac{\pm e_i\pm e_j\pm e_k\pm e_\ell}2$ for sets 
$\{i,j,k,\ell\}$ which are the support of a weight~~$4$ word of~$C$. 
For the basic terminology of coding theory used here and in the sequel, we refer to \cite{cs-1998}.

\begin{remark}\label{remLC}
Let $C\ne 0$. Then $\Lb_C$ is not integral, 
and the smallest minimum 
which makes it integral is $2$ if $C$ is even and the intersections 
of the support of its words are even sets; then $\Lb_C$ scaled to 
minimum~$2$ is even \ifff $C$ is doubly even, and $4$ otherwise; 
when scaled to minimum~$4$, $\Lb_C$ is even \ifff $C$~is.
\end{remark}

\begin{prop}\label{propcomplete1} 
Let $C$ be a binary code of weight $w\ge 4$. Denote by $w_4$ 
the number of its weight-$4$ words and by $t$ the number of sets 
$\{i,j\}$ such that $i$ and $j$ do not belong to the support 
of the same weight-$4$ word. Then 
$$s(\Lb_C)=n+8\,w_4\ \nd\ r(\Lb_C)=\frac{n(n+1)}2-t\,.$$ 
\end{prop}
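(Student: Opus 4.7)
The plan is to first identify all minimal vectors of $\Lb_C$ explicitly, giving $s(\Lb_C)$ by counting, and then to compute the dimension of the linear span of the outer products $vv^t$ in $\sn$ for $v\in S(\Lb_C)$, which yields $r(\Lb_C)$.

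By Remark~\ref{rem:averaging} we may assume $\Lb'=\Z^n$ with orthonormal basis $(e_1,\dots,e_n)$. Any $v\in\Lb_C$ has the form $v=\frac12\sum a_i e_i$ with $(a_i)\bmod 2\in C$, so $N(v)=\frac14\sum a_i^2$, and $N(v)=1$ forces $\sum a_i^2=4$. The only integer solutions are exactly one $|a_i|=2$ (yielding $\pm e_i$, available because $0\in C$), or exactly four $|a_i|=1$ supported on a set $\{i,j,k,\ell\}$ that must be the support of a weight-$4$ codeword. The hypothesis $w\ge 4$ rules out codewords of weight $1,2,3$, so $\min\Lb_C=1$ is correctly achieved. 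The first family contributes $n$ opposite pairs, and each weight-$4$ codeword contributes $2^4/2=8$ opposite pairs, so $s(\Lb_C)=n+8w_4$.

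For the perfection rank, $r(\Lb_C)=\dim V$ where $V=\mathrm{span}\{vv^t:v\in S(\Lb_C)\}\subset\sn$. Decompose $\sn=D\oplus D^{\bot}$ into the $n$-dimensional diagonal subspace $D$ and its orthogonal complement. The outer products $e_i e_i^t$ coming from $\pm e_i$ already span $D$. For a weight-$4$ codeword with support $S=\{i,j,k,\ell\}$, each of the corresponding $16$ minimal vectors $v_\epsilon=\frac12\sum_{a\in S}\epsilon_a e_a$ satisfies
\[
v_\epsilon v_\epsilon^t=\tfrac14\sum_{a\in S}e_a e_a^t+\tfrac14\sum_{\{a,b\}\subset S}\epsilon_a\epsilon_b\, M_{ab},
\]
where $M_{ab}=e_a e_b^t+e_b e_a^t$. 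The diagonal part is a fixed element of $D$; the off-diagonal coefficients $\eta_{ab}=\epsilon_a\epsilon_b$, as $\epsilon$ ranges over the $8$ classes in $\{\pm 1\}^S/\{\pm 1\}\simeq(\Z/2)^3$, furnish six of the eight non-trivial characters of $(\Z/2)^3$, which are linearly independent by orthogonality of characters. Hence the off-diagonal parts of the $v_\epsilon v_\epsilon^t$ span the full six-dimensional $\mathrm{span}\{M_{ab}:\{a,b\}\subset S\}$.

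Taking the union over all weight-$4$ codewords, the off-diagonal contribution to $V$ is $\mathrm{span}\{M_{ab}:\{a,b\}\text{ lies in the support of some weight-}4\text{ codeword}\}$, whose dimension equals the number of such pairs, i.e.\ $\binom{n}{2}-t$ by definition of $t$. Adding the diagonal dimension $n$ yields $r(\Lb_C)=n+\binom{n}{2}-t=\frac{n(n+1)}{2}-t$. The one non-routine step is the character-theoretic claim that a single weight-$4$ codeword suffices to recover every off-diagonal direction indexed by pairs inside its support; everything else is bookkeeping of integer decompositions of $4$ and sign patterns.
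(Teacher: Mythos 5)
Your proposal is correct. The computation of $s(\Lb_C)$ is essentially the paper's: the paper decomposes $\Lb_C$ into cosets $\Lb'+\tfrac{x_w}{2}$ and observes that the minimum of such a coset is $\tfrac{\mathrm{weight}(w)}{4}$, which is just another packaging of your enumeration of the integer solutions of $\sum a_i^2=4$ compatible with $C$; both give the $n$ pairs $\pm e_i$ plus $8$ pairs per weight-$4$ word.

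For the perfection rank the two arguments differ in presentation, though they are dual to one another. You compute the \emph{image} side directly: $r(\Lb_C)=\dim\mathrm{span}\{vv^t: v\in S(\Lb_C)\}$, splitting $\sn$ into diagonal and off-diagonal parts and using orthogonality of the six characters $\epsilon\mapsto\epsilon_a\epsilon_b$ on $\{\pm1\}^S/\{\pm1\}\cong(\Z/2\Z)^3$ to see that one weight-$4$ word already spans all six off-diagonal directions $M_{ab}$ inside its support. The paper instead computes the \emph{kernel} side: it deforms $\Z^n$ inside a neighborhood $\cV_\eps$ in the space of well-rounded lattices of minimum $1$, shows that adjoining the half-vectors forces $e_i\cdot e_j=0$ exactly for the pairs lying in the support of a weight-$4$ word, and concludes that the minimal class depends on $t$ free parameters, so the perfection co-rank is $t$. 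The two are related by rank–nullity applied to the linear forms $G\mapsto\langle G,vv^t\rangle$, and the same character-independence fact is implicitly what makes the paper's ``forces $e_i\cdot e_j=0$'' step work. Your version has the advantage of making that combinatorial kernel explicit and of computing the perfection rank from its definition without invoking the local structure of the Ryshkov polyhedron; the paper's version fits more naturally with its later use of the deformation picture (e.g.\ Remark~\ref{rem:eutcomplete} and the discussion of minimal classes). Both are complete proofs.
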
 

\begin{proof} 
Write $\Lb_C$ as a union $\cup_w\Lb'+\frac{x_w}2$ where $x_w$ 
runs through a set of representatives with components $0,1$ 
of the words of~$C$. 
It is clear that the minimum of $\Lb'+x$ is equal to 
$\frac{\text{weight}(w)}4$, which gives the result for~$s$. 

For $\eps>0$ small enough, let $\cV_\eps$ be the set 
of lattices of the form $\la e_1,\dots,e_n\ra$ 
with $\abs{N(e_i)-1}\le\eps$ 
($1\le i\le n$) and $\abs{e_j\cdot e_k}\le \eps$ ($1\le j<k\le n$).
Then the set $\cV_\eps$ is a neighborhood of~$\Lb'$ 
in the set $\cE_n$ of similarity classes of well-rounded lattices 
of minimum~$1$. 
The set of lattices obtained from lattices $\Lb'\in\cV_\eps$ 
by adjunctions of the vectors $\frac{x_w}2$ as above also is 
a neighborhood of~$\Lb$ in $\cE_n$, 
and these neighbor lattices will not have minimum~$1$ 
unless $e_i\cdot e_j=0$, whenever $i,j$ lie in the support 
of a same weight-$4$ word. When this condition holds, 
one has $S(L)=S(\Lb)$ for all $L\in\cV_\eps$ 
and small enough~$\eps$. 
This proves that lattices in $\cV_\eps$ depend 
up to similarity on $t$ independent parameters, which shows that the perfection 
co-rank of every $L\in\cV_\eps$ is equal to~$t$. 
\end{proof}

\begin{defi}\label{defcomplete}{\rm 
We say that a binary code $C$ of weight~$4$ is {\em complete} 
if every $2$-set $\{i,j\}$ belongs to at least one weight-$4$ 
word of~$C$.
}
\end{defi}

\begin{remark}\label{rem:eutcomplete}
Lattices generated by $\Z^n$, together with vectors of the form
$v=\frac{e_{i_1}+\dots+e_{i_k}}2$ with $k\ge 4$ are eutactic. Indeed
vectors with $k\ge 5$ may be disregarded. When $k=4$, we have the relation
$$\sum p_{\frac{e_{i_1}\pm\dots\pm e_{i_4}}2}=2(p_{e_{i_1}}+\dots+p_{e_{i_4}}).$$
Thus we may modify the basic eutaxy relation $\sum p_{e_i}=\Id$
of $\Z^n$ by inserting a coefficient $1-\eps$ in front of
$p_{e_{i_1}}+\dots+p_{e_{i_4}}$ and adding $\frac{\eps}2$ times
the eight terms sum above; we conclude the proof by induction on the
number of vectors with~$k=4$, after having chosen a small
enough~$\eps$.

As a consequence, lattices constructed with a complete binary code
are not merely perfect, but also extreme by a theorem of Voronoi.
\end{remark}

Note that we avoid the tempting notion 
of a {\em perfect code} as it already exists in coding theory.
As was pointed out to us by Gilles Z\'emor, complete codes are
the codes having covering radius~$2$. He suggested also 
the following first examples to us:
Extend the definition of a complete code to codes of weight~$3$. 
Then:
\begin{enumerate} 
\item
the extension of such a code by the parity check is a complete code of weight~$4$, and
\item 
the Hamming code with parameters $(2^{p}-1,p,3)$ 
is complete as a weight-$3$ code, so that the extended code 
with parameters $(2^p,p,4)$ is a complete code of weight~$4$.
\end{enumerate}
 
Applied to the Hamming code $\cH_7$, we obtain the extended 
Hamming code $\cH_8$, for which $\Lb_C\simeq\E_8$. 
The easy proposition below 
gives a method for constructing complete codes.

\begin{prop}\label{propcomplete2} 
Let $C$ be a binary $(n,k,4)$ code, and let $C'$ be its extension 
to length $n+2$ by the vector $(0^{n-2},1^4)$. 
Then $C'$ is an 
$(n+2,k+1,4)$ code, and $C'$ is complete \ifff 
$C$ is complete and for every $i<n-1$, there exists $j\ne i$ 
such that $(i,j,n-1,n)$ is the support of a weight-$4$ word of~$C$; 
in particular, $n$ must be even. 
\end{prop}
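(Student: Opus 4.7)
The plan is to first verify the claimed parameters of $C'$, then give a complete classification of its weight-$4$ words, and finally translate the completeness condition for $C'$ into separate requirements on the various types of pairs $\{a,b\}\subset\{1,\dots,n+2\}$.

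\textbf{Parameters of $C'$.} Denote $v=(0^{n-2},1^4)$, so the support of $v$ is $\{n-1,n,n+1,n+2\}$, and identify words of $C$ with their zero-padded extensions to length $n+2$. Since $v$ has nonzero coordinates at the fresh positions $n+1,n+2$ while all codewords of $C$ vanish there, $v\notin C$ and $\dim C' = k+1$. For any $c\in C$ and $\eps\in\{0,1\}$,
$$ w(c+\eps v)\;=\;w(c)+\eps\bigl(4-2\abs{\mathrm{supp}(c)\cap\{n-1,n\}}\bigr). $$
If $\eps=0$ this is either $0$ or $\ge 4$; if $\eps=1$ and $c=0$ it equals $4$; and if $\eps=1$, $c\neq 0$, then $w(c)\ge 4$ and the above formula gives $w(c+v)\ge 4+4-4=4$. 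Hence $C'$ has minimum weight exactly~$4$.

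\textbf{Classification of weight-$4$ words of $C'$.} From the formula, a word $c+v$ with $c\neq 0$ has weight $4$ if and only if $w(c)=4$ and $\{n-1,n\}\subset \mathrm{supp}(c)$; such a $c$ has support $\{i,j,n-1,n\}$ with $i,j\in\{1,\dots,n-2\}$, and then $c+v$ has support $\{i,j,n+1,n+2\}$. Therefore the weight-$4$ words of $C'$ are (up to sign): (a) the weight-$4$ words of $C$, with supports in $\{1,\dots,n\}$; (b) the word $v$ itself, with support $\{n-1,n,n+1,n+2\}$; (c) the words $c+v$ where $c$ is a weight-$4$ word of $C$ containing $\{n-1,n\}$ in its support, and whose support is therefore of the form $\{i,j,n+1,n+2\}$ with $i,j\in\{1,\dots,n-2\}$.

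\textbf{Translating completeness.} I now inspect each $2$-subset $\{a,b\}\subset\{1,\dots,n+2\}$ and ask which weight-$4$ word can cover it.
\emph{If} $\{a,b\}\subset\{1,\dots,n\}$: only words of type (a) can cover it (type (b) covers only $2$-subsets of $\{n-1,n,n+1,n+2\}$, and type (c) has support $\{i,j,n+1,n+2\}$). So for such pairs, coverage is equivalent to completeness of $C$ itself -- except for $\{n-1,n\}$, which is covered by $v$ and imposes no extra condition.
\emph{If} $\{a,b\}\subset\{n-1,n,n+1,n+2\}$: all such pairs are covered by $v$.
\emph{If} $a\in\{1,\dots,n\}$ and $b\in\{n+1,n+2\}$: type (a) cannot cover it; type (b) covers it iff $a\in\{n-1,n\}$; type (c) covers it iff $a\in\{i,j\}$ for some type (c) word, i.e.\ iff there is a weight-$4$ word of $C$ whose support is $\{a,j,n-1,n\}$ for some $j\neq a$. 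Combined, coverage of these mixed pairs is equivalent to the second condition in the statement (applied to $i=a$), the cases $a\in\{n-1,n\}$ being automatic.

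\textbf{Parity.} If the second condition holds, then for every $i\in\{1,\dots,n-2\}$ there is a weight-$4$ word with support $\{i,j(i),n-1,n\}$. This pairs $i$ with $j(i)\in\{1,\dots,n-2\}\setminus\{i\}$; by symmetry $j(j(i))=i$, so the assignment $i\mapsto j(i)$ is a fixed-point-free involution of $\{1,\dots,n-2\}$, i.e.\ a perfect matching. Hence $n-2$ is even and $n$ is even.

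Assembling the three local equivalences yields the desired characterization. The only genuinely delicate point is the case analysis for mixed pairs $\{a,b\}$ with $b\in\{n+1,n+2\}$; the rest is bookkeeping, and the parity statement falls out as a direct corollary of the matching interpretation.
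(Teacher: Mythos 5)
The paper actually omits its own proof of this proposition (``We omit the proof, as it is not essential for our classification''), so there is nothing to compare against; your strategy --- compute the parameters, classify the weight-$4$ words of $C'$ into the three types (a) words of $C$, (b) the vector $v$, (c) words $c+v$ with $\mathrm{supp}(c)=\{i,j,n-1,n\}$, and then check coverage pair-type by pair-type --- is the natural one, and the main equivalence comes out correctly. Two small imprecisions in the bookkeeping: first, a type (c) word with support $\{i,j,n+1,n+2\}$ \emph{does} cover the pair $\{i,j\}\subset\{1,\dots,n-2\}$, so your parenthetical claim that only type (a) words can cover pairs inside $\{1,\dots,n\}$ is literally false; it does no harm only because every such type (c) word arises from a type (a) word with support $\{i,j,n-1,n\}$ covering the same pair, and you should say so. Second, in the direction ``$C'$ complete $\Rightarrow$ $C$ complete'' your Case 1 only yields coverage in $C$ of the pairs of $\{1,\dots,n\}$ \emph{other than} $\{n-1,n\}$; coverage of $\{n-1,n\}$ itself has to be extracted from condition (*) obtained in Case 3, which is a one-line remark but is currently missing from ``assembling the three local equivalences.''

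The genuine gap is in the parity argument. From condition (*) you only get a graph on $\{1,\dots,n-2\}$ with an edge $\{i,j\}$ whenever $\{i,j,n-1,n\}$ supports a weight-$4$ word, and (*) says this graph has no isolated vertex. ``By symmetry $j(j(i))=i$'' does not make the choice function $i\mapsto j(i)$ an involution: a graph with no isolated vertices need not be a perfect matching (a triangle on three vertices satisfies (*) with $n-2=3$ odd), so as written the conclusion that $n$ is even does not follow. What saves the statement is the minimum distance: if two \emph{distinct} weight-$4$ words of $C$ both had supports of the form $\{i,j,n-1,n\}$ and $\{i,j',n-1,n\}$ with $j\ne j'$, their sum would be a nonzero word of weight $2$, contradicting $d(C)=4$. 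Hence every vertex of the graph has degree exactly one, the edges form a perfect matching of $\{1,\dots,n-2\}$, and $n$ is even. You need to insert this minimum-distance argument; without it the parity claim is unproved.
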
 

We omit the proof, as it is not essential for our classification. 
Applying the proposition, 
we see that the $(2m,m-1,4)$ code generated by the words 
$(1^4,0^{2m-4})$, $(0^2,1^4,0^{2m-6})$,\dots, $(0^{2m-4},1^4)$ 
is complete; the corresponding lattice $\Lb_C$ is isometric 
to $\D_{2m}$. Together with $\cH_8$ and its $(7,3,4)$ subcode, 
this exhausts the list of complete codes of length $\ell\le 8$.


%
%
%

\subsection{2-elementary quotients: classification}%
\label{subsec2-elem}

The classification of lattices $\Lb_C$ up to dimension $n=9$ 
amounts to that of binary codes of weight $w\ge 4$ and length 
$\ell\le 9$. 

For type $2^2$, we state the result for all dimensions.
Binary codes~$C$ of dimension~$2$ contain 
$3$~non-zero words $c_1,c_2,c_3$ of weights $w_1,w_2,w_3\ge 4$, 
and since $\Lb_C$ must non-trivially extend a lattice of lower 
dimension, the supports of two words must cover the set 
$\{1,\dots,n\}$. Codes are described by a basis $c_1,c_2$, 
which may be assumed to satisfy 
$4\le w_1\le w_2\le w_3=2n-w_1-w_2$. It is then easily checked 
that quotients of type~$2^2$ come from $2$-dimensional codes 
which are classified by the rules 
$$
4\le w_1\le \frac{2n}3\ \nd\ 
\max(w_1,4)\le w_2\le n-\frac{w_1}2\,
.
$$ 
Here is the list of weight systems for codes of dimension~$2$ and
length $\ell\le 9$, from which we can read $s(\Lb_C)$ 
of the corresponding lattices $\Lb_C$ and find with little effort 
their perfection rank (using Proposition~\ref{propcomplete1}). 

\begin{itemize}

\item[$n=6$:] $(4^3)$. 

\item[$n=7$:] $(4^2,6)$, $(4,5^2)$. 

\item[$n=8$:] $(4^2,8)$, $(4,5,7)$, $(4,6^2)$, $(5^2,6)$. 

\item[$n=9$:] $(4,5,9)$, $(4,6,8)$, $(4,7^2)$, $(5,5,8)$, $(5,6,7)$, $(6^3)$. 

\end{itemize}

In Table~\ref{tab:n9d22-table} we give a list 
with generators and the associated basic invariants for the $n=9$ cases.

\begin{table}[ht]
\begin{tabular}{|c|c|c|c|}
\hline
generators  & $s$  & $r$ &  $s'$  \\
\hline
\hline
(1,1,1,1,0,0,0,0,0),(0,0,0,0,1,1,1,1,1)  &  $17$  &  $15$ &  $9$ \\
(1,1,1,1,0,0,0,0,0),(0,0,0,1,1,1,1,1,1)  &  $17$  &  $15$ &  $9$ \\
(1,1,1,1,0,0,0,0,0),(0,0,1,1,1,1,1,1,1)  &  $17$  &  $15$ &  $9$ \\
(1,1,1,1,1,0,0,0,0),(0,0,0,0,1,1,1,1,1)  &  $9$   &  $9$  &  $9$ \\
(1,1,1,1,1,0,0,0,0),(0,0,0,1,1,1,1,1,1)  &  $9$   &  $9$  &  $9$ \\
(1,1,1,1,1,1,0,0,0),(0,0,0,1,1,1,1,1,1)  &  $9$   &  $9$  &  $9$ \\
\hline
\end{tabular}
\medskip
\caption{Non-cyclic cases for $n=9$ and $d=2^2$}
\label{tab:n9d22-table}
\end{table}

It is easily checked that the unique code of length~$8$ and weight~$5$ 
has no extension of dimension~$3$ and weight~$5$ to length~$9$.
We may thus from now on restrict ourselves to codes of weight~$4$, 
when considering binary codes of dimension at least~$3$.

\smallskip

\begin{table}
\begin{tabular}{|c|c|c|c|}
\hline
generators aside of (1,1,1,1,0,0,0,0,0) & $s$  &  $r$  &  $s'$  \\
\hline
\hline
(0,0,1,1,1,1,0,0,0),(0,0,0,0,0,1,1,1,1)  &  $41$  &  $30$  & $9$ \\
(0,0,1,1,1,1,0,0,0),(0,0,0,0,1,1,1,1,1)  &  $33$  &  $24$  & $9$ \\
(0,0,1,1,1,1,0,0,0),(0,0,0,1,0,1,1,1,1)  &  $33$  &  $24$  & $9$ \\
(0,0,1,1,1,1,0,0,0),(0,1,0,1,0,1,1,1,1)  &  $33$  &  $24$  & $9$ \\
(0,0,0,1,1,1,1,0,0),(0,0,1,0,0,0,1,1,1)  &  $33$  &  $27$  & $9$ \\
(0,0,0,1,1,1,1,0,0),(0,0,1,0,0,1,1,1,1)  &  $25$  &  $21$  & $9$ \\
(0,0,1,1,1,1,1,0,0),(0,0,0,0,0,1,1,1,1)  &  $25$  &  $21$  & $9$ \\
(0,0,1,1,1,1,1,0,0),(0,1,0,1,0,0,1,1,1)  &  $17$  &  $15$  & $9$ \\
\hline
\end{tabular}
\medskip
\caption{Non-cyclic cases for $n=9$ and $d=2^3$}
\label{tab:n9d222-table}
\end{table}

Next we turn to binary codes of dimension~$3$. 
As shown in~\cite{martinet-2001}, there is one code if $n=7$, 
and three new codes if $n=8$. 
From the list of codes of dimension~$2$ of length $\ell\le 8$ above, 
one easily proves that there are eight new codes in dimension~$9$.
See Table~\ref{tab:n9d222-table}. The basic invariants can be
easily computed using Proposition~\ref{propcomplete1}.

\smallskip

\begin{table}
\begin{tabular}{|c|c|c|c|}
\hline
generators aside of (1,1,1,1,0,0,0,0,0),(0,0,1,1,1,1,0,0,0) &  $s$  &  $r$  &  $s'$  \\
\hline
\hline
(0,1,0,1,0,1,1,0,0),(1,1,0,0,0,0,0,1,1)  &  $89$  &  $43$  &  $9$  \\
(0,1,0,1,0,1,1,0,0),(1,1,0,0,0,0,1,1,1)  &  $65$  &  $30$  &  $9$  \\
(0,0,0,0,1,1,1,1,0),(0,1,0,1,0,1,0,1,1)  &  $57$  &  $37$  &  $9$  \\
(0,0,0,1,0,1,1,1,0),(1,0,1,0,0,0,0,1,1)  &  $81$  &  $45$  &  $9$  \\
\hline
\end{tabular}
\medskip
\caption{Non-cyclic cases for $n=9$ and $d=2^4$}
\label{tab:n9d2222-table}
\end{table}

Extending the four codes of dimension~$3$ and length~$\ell\le 8$, 
we prove that there are four codes in dimension~$9$ besides 
the trivial extension of the $(8,4,4)$ extended Hamming code~$\cH_8$. 
See Table~\ref{tab:n9d2222-table}.
Again, the basic invariants can be
easily computed using Proposition~\ref{propcomplete1}.

\smallskip 

Since the automorphism of $\cH_8$ is $3$-fold transitive 
on the coordinates, it does not extend to a code of dimension~$5$ 
and length~$9$, which completes the classification of $2$-elementary 
codes for $n=9$.
Note that the latter code extends to a  
code of dimension~$5$ and length~$10$ though, 
which lifts to the lattice $\la\E_8,\D_{10}\ra$; 
see Appendix~A.

\subsection{3-elementary quotients}\label{subse3-elem}

Quotients of $\Lb/\Lb'$ of type $3^k$ are construc\-ted using ternary 
codes of weight $w\ge 6$ and dimension~$k$, but the existence 
of a code $C$ does not imply the existence of a pair $(\Lb,\Lb')$ 
defining~$C$, as shown by the lemma and the comment below.

\begin{lemma}\label{lem333} 
There does not exist $9$-dimensional pairs $(\Lb,\Lb')$ 
with $\Lb/\Lb'$ $3$-elementary of order~$27$. 
\end{lemma}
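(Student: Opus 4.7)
The plan is to derive a contradiction by evaluating the sum $\sum_{c\ne 0} G[c]$ in two different ways, where $G$ denotes the Gram matrix of the hypothetical basis $(e_1,\dots,e_9)$ of $\Lb'$ of minimal vectors (so $G_{ii}=1$) and where $c$ ranges over the $26$ non-zero codewords of $C\subset(\Z/3\Z)^9$, lifted to $\{-1,0,1\}^9$. On one hand, each representative $c/3$ is a non-zero element of $\Lb$, so $N(c/3)\ge\min\Lb=1$ forces $G[c]\ge 9$ and therefore $\sum_{c\ne 0}G[c]\ge 26\cdot 9=234$. I will show that in fact this sum equals exactly $162$.

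The heart of the argument will be a combinatorial lemma: in any $3$-dimensional code $C$ over $\Z/3\Z$ of length~$9$ with minimum Hamming weight~$\ge 6$ (which Corollary~\ref{corwatid} forces here), the nine columns of a generator matrix represent nine \emph{distinct} points of the projective plane $PG(2,3)$. To see this, I would assign multiplicities $k_j\ge 0$ to the $13$ points of $PG(2,3)$ with $\sum_j k_j=9$; via the identity $w(\lambda\cdot M)=9-\sum_{j\in\lambda^\perp}k_j$, the weight-$6$ condition reads: every line of $PG(2,3)$ has total weight at most~$3$. If some $k_{j_0}\ge 2$, then on each of the $4$ lines through $j_0$ the remaining three points contribute total weight at most $3-k_{j_0}$; since those $4$ lines partition the $12$ points distinct from $j_0$, one obtains $\sum_j k_j\le k_{j_0}+4(3-k_{j_0})=12-3k_{j_0}\le 6<9$, a contradiction. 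Hence $k_j\in\{0,1\}$ for all $j$, so the nine columns represent nine distinct points of $PG(2,3)$.

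Granted this, any two columns $M_i,M_j$ are linearly independent in $(\Z/3\Z)^3$, so for every $i\ne j$ the pair-evaluation $C\to(\Z/3\Z)^2$, $c\mapsto(c_i,c_j)$, is surjective with $3$ preimages per value; in particular each single-coordinate evaluation is also surjective. Setting $T_{ij}:=\sum_{c\ne 0}c_ic_j$ and using the sign lift $\phi\colon\Z/3\Z\to\{-1,0,1\}$ with $\phi(0)=0,\phi(1)=1,\phi(2)=-1$ (so $\sum_u\phi(u)=0$), I will then obtain $T_{ij}=3\bigl(\sum_u\phi(u)\bigr)^2=0$ for $i\ne j$ and $T_{ii}=18$. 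Consequently
\[
\sum_{c\ne 0}G[c]=\sum_{i,j}G_{ij}T_{ij}=9\cdot 18+0=162,
\]
contradicting the bound $\ge 234$.

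The only genuinely non-trivial step is the $PG(2,3)$ counting that rules out repeated columns; once it is in place, the remainder is elementary bookkeeping and requires neither PSD-ness, eutaxy considerations, nor Hermite-constant bounds.
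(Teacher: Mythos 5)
Your argument is correct, and it takes a genuinely different route from the paper's. The paper shortens the hypothetical $(9,3,\ge 6)$ ternary code at one coordinate to the unique $(8,2,6)$ code, whose lift is $\E_8$; hence $\Lb$ would contain to index~$3$ a lattice with an $\E_8$-section of the same minimum, contradicting Lemma~\ref{lemsecE8} --- which in turn rests on the eutaxy machinery of Proposition~\ref{prop:sumorth} and on the numerical Cohn--Elkies bound $\g_9^{9/2}\le 30.21$ of~\eqref{eqn:boundongamma9and10}. You replace all of that by a self-contained averaging identity: once the nine columns are pairwise independent in $(\Z/3\Z)^3$, the off-diagonal character sums $T_{ij}$ vanish, so $\sum_{c\ne 0}G[c]=18\,\Tr G=162$ is pinned independently of the off-diagonal Gram entries, while $\min\Lb=1$ forces this sum to be at least $26\cdot 9=234$. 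Your $PG(2,3)$ count (every line carries multiplicity at most $3$; the four lines through a repeated point partition the other twelve points) is right, and it is precisely where dimension~$3$ of the code enters: for the genuine $(9,2,6)$ codes the repeated columns produce nonzero $T_{ij}$ that positive off-diagonal Gram entries can absorb, so no false contradiction arises there. What your approach buys is independence from the Hermite-constant estimate and from the classification of length-$8$ ternary codes; what the paper's buys is brevity, since Lemma~\ref{lemsecE8} is already in place and reused elsewhere. Two small loose ends to tidy: (i) your multiplicity count tacitly assumes all nine columns are nonzero (otherwise they are not points of $PG(2,3)$ and $\sum_j k_j<9$); this closes immediately, since with $m\ge1$ zero columns every line would carry multiplicity at most $3-m$ and summing over the thirteen lines (each point on four of them) gives $4(9-m)\le 13(3-m)$, which is absurd --- alternatively a zero column exhibits an $8$-dimensional pair of type $3^3$, excluded by $\lf\g_8^4\rf=16<27$; (ii) the minimum weight $\ge 6$ comes from Corollary~\ref{corwatid} applied after discarding the zero coefficients, which is routine but worth a sentence.
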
 

\begin{proof} 
A ternary code $C$ of length~$9$ and dimension~$3$ extends 
a ternary code $C_0$ of length~$8$ and dimension~$2$. 
There is a unique code $C_0$, and the lattice $\Lb_0$ defined 
by $C_0$ is the $\E_8$ lattice. Hence $\Lb$ must contain to index~$3$ 
a lattice having an $\E_8$ cross-section, 
which contradicts Lemma~\ref{lemsecE8}.
\end{proof}

Note that despite this Lemma, there exists a (unique) ternary code 
with parameters $(9,3,6)$, given by the generating matrix
$$\left(\stx1&1&1&1&1&1&0&0&0 \\ 0&0&-1&-1&1&1&1&1&0 \\ 
0&1&1&-1&-1&0&1&0&1\estx\right) .$$ 
Its weight system is $(9,6^{12})$. 

\smallskip 

It is easily checked that there are three ternary codes 
with parameters~$(9,2,6)$ and one with parameters~$(8,2,6)$. 
Their respective weight systems are $(6^3,9)$, $(6^2,7,8)$, $(6,7^3)$ 
and $(6^4)$; generating matrices for the first three can be read in 
Table~\ref{tab:n9d33-table}; the latter one, 
referred to in Lemma~\ref{lem333}, extends a code of length~$8$.

The averaging argument of Remark~\ref{rem:averaging} 
applied to the first three codes produces 
matrices depending on two, zero, and three parameters. 
In the first and third case, we find lattices in this way 
for which $s(\Lb)$ takes the smallest possible value 
compatible with Watson's conditions 
(see Proposition~\ref{propwatid}). 
Hence the minimal classes 
of our three lattices are the smallest possible, 
with invariants $s,r$ and $s'$ as displayed in Table~\ref{tab:n9d33-table}.

\begin{table}
\begin{tabular}{|c|c|c|c|}
\hline
generators  & $s$  &  $r$  &  $s'$  \\
\hline
\hline
(1,1,1,1,1,1,0,0,0),(0,0,0,1,1,1,1,1,1)  &  $27$  &  $23$  &  $9$ \\
(1,1,1,1,1,1,0,0,0),(1,1,2,0,0,0,1,1,1)  &  $50$  &  $37$  &  $10$ \\
(1,1,1,1,1,1,0,0,0),(1,1,0,0,2,2,1,1,1)  &  $15$  &  $14$  &  $9$ \\
\hline
\end{tabular}
\medskip
\caption{Non-cyclic cases for $n=9$ and $d=3^2$}
\label{tab:n9d33-table}
\end{table}

Below we give Gram matrices for the three lattices $\Lb$, 
obtained by replacing $e_1$ and $e_9$ in a basis $(e_1,\dots,e_9)$ 
for $\Lb'$, by vectors with denominators $3$ and with there 
numerators containing representatives obtained from two code words in Table~\ref{tab:n9d33-table}: 
{\small 
$$ 
\left(\stx94&47&47&47&47&47&0&0&47\\47&90&18&5&5&5&-5&-5&0\\47&18&90&5&5&5&-5&-5&0\\47&5&5&90&18&18&5&5&47\\47&5&5&18&90&18&5&5&47\\47&5&5&18&18&90&5&5&47\\0&-5&-5&5&5&5&90&18&47\\0&-5&-5&5&5&5&18&90&47\\47&0&0&47&47&47&47&47&94\estx\right),
\left(\stx18&9&9&9&9&9&-3&-3&0\\9&18&3&3&0&0&-2&-2&-3\\9&3&18&3&0&0&-2&-2&-3\\9&3&3&18&0&0&-3&-3&-9\\9&0&0&0&18&9&0&0&9\\9&0&0&0&9&18&0&0&9\\-3&-2&-2&-3&0&0&18&3&9\\-3&-2&-2&-3&0&0&3&18&9\\0&-3&-3&-9&9&9&9&9&18\estx\right), 
$$
{}
$$
\left(\stx120&60&60&60&60&60&0&0&0\\60&108&9&9&9&9&0&0&0\\60&9&108&36&9&9&0&0&42\\60&9&36&108&9&9&0&0&42\\60&9&9&9&108&36&0&0&-42\\60&9&9&9&36&108&0&0&-42\\0&0&0&0&0&0&108&27&54\\0&0&0&0&0&0&27&108&54\\0&0&42&42&-42&-42&54&54&110\estx\right) 
\,.$$ 
}

Index systems for the Gram matrices, hence also for the corresponding minimal classes are:

\ctl{$\{1,2,3,4,2^2,6,3^2\},\ \{1,2,3,4,2^2,5,6,7,8,4\cdot 2,9,3^2\},\ \{3,6,2^2\}\, .$}

\subsection{Quotients of type 4$\cdot$2}\label{subse4.2} 

Here, $n=9$. 
We define integers $a_i,b_i,\,1\le i\le n$, $m_1$, $m_2$, 
such that $m_1\ge 4$, $m:=m_1+m_2\in\{7,8,9\}$,  
writing $\Lb$ in the form $\Lb=\la\Lb',e,f\ra$, where 
$$e=\frac{a_1 e_1+\dots+a_n e_n}4\ \nd\ 
f=\frac{b_1\,e_1+\dots+b_n\,e_n}2$$ 
with $a_i\in\{0,1,2\}$, $b_i\in\{0,1\}$, 
$a_i=1$ for $i\le m_1$, $a_i=2$ for $m_1 + 1\le i\le m_1+m_2$, 
$a_i=1$ for $i>m$. 
We also consider 
$$e'=\frac{a'_1 e_1+\dots+a'_n e_n}4\ \nd\ 
f'=\frac{b'_1\,e_1+\dots+b'_n\,e_n}2\,,$$ 
$e'\equiv e+f\mod\Lb'$, $f'\equiv 2e+f\mod\Lb'$, 
$a'_i=\pm 1$ for $i\le m_1$, $a'_i=0\text{ or }2$ 
for $i>m_1$, and $b'_i=0\text{ or }1$. 
Note that $m_1$, namely the number of components $\pm 1$ of words 
attached to denominator~$4$, is an invariant for all codes 
of the form $4\cdot 2^k$. 

We first prove that $m_1=9$ is impossible. 
This shows that minimizing $m_1+m_2$ by exchanging $e$ and $e'$ 
if needed, we may assume  that $m_1+m_2\le 8$, 
i.e., that all codes extend some $7$- or $8$-dimensional
$\Z/4\Z$-code. Then we must have $b_9=1$. 

\smallskip

All together, there are $26$ new codes in dimension~$9$ displayed 
in Table~\ref{tab:n9d42-table} (thus with the extensions of the three 
$8$-dimensional codes, there exist $29$ codes). 
They have been classified by first choosing~$m$, then~$m_1$ 
as small as possible, then choosing~$f$ as short as possible. 
The numbers of codes for given pairs~$(m_1,m_2)$ as above are 

\smallskip 

\ctl{
$(4,3)$: $6$; $(5,2)$: $6$; $(6,1)$: $5$; 
$(4,4)$, $(5,3)$: $1$; $(6,2)$, $(7,1)$: $3$; $(8,0)$: $1$ .} 

\smallskip 

\noi 
They define only $22$~minimal classes, as the two codes with 
$(s,r)=(41,30)$, those with $(s,r)=(33,27)$ in lines $2$ and $4$ 
of Table~\ref{tab:n9d42-table}, and the three codes with $(s,r)=(9,9)$ 
define the same minimal classes. 

\smallskip

\begin{table}
\begin{tabular}{|c|c|c|c|}
\hline
generators  &  $s$  &  $r$  &  $s'$  \\
\hline
\hline
(1,1,1,1,2,2,2,0,0),(0,0,0,0,0,1,1,1,1]) &  $41$  &  $30$  &  $9$  \\
(1,1,1,1,2,2,2,0,0),(0,0,0,1,0,1,1,1,1)  &  $33$  &  $27$  &  $9$   \\
(1,1,1,1,2,2,2,0,0),(0,0,1,1,0,1,1,1,1)  &  $33$  &  $27$  &  $9$   \\
(1,1,1,1,2,2,2,0,0),(0,0,0,1,0,0,1,1,1)  & $33$ &  $27$  & $9$ \\ 
(1,1,1,1,2,2,2,0,0),(0,0,1,1,0,0,1,1,1)  &  $25$  &  $21$  &  $9$   \\
(1,1,1,1,2,2,2,0,0),(0,0,1,1,0,0,0,1,1) & $41$ & $30$ &  $9$ \\ 
(1,1,1,1,1,2,2,0,0),(0,0,0,0,0,1,1,1,1)  &  $17$  &  $15$  &  $9$   \\
(1,1,1,1,1,2,2,0,0),(0,0,0,0,1,1,1,1,1)  &  $23$  &  $22$  &  $9$   \\
(1,1,1,1,1,2,2,0,0),(0,0,0,1,1,1,1,1,1)  &  $56$  &  $37$  &  $12$   \\
(1,1,1,1,1,2,2,0,0),(0,0,0,0,1,0,1,1,1)  &  $17$  &  $15$  &  $9$   \\
(1,1,1,1,1,2,2,0,0),(0,0,0,1,1,0,1,1,1)  &  $9$  &  $9$  &  $9$   \\
(1,1,1,1,1,2,2,0,0),(0,0,0,1,1,0,0,1,1)  &  $24$  &  $21$  &  $9$   \\
(1,1,1,1,1,1,2,0,0),(0,0,0,0,1,1,0,1,1)  &  $46$  &  $34$  &  $9$   \\
(1,1,1,1,1,1,2,0,0),(0,0,0,1,1,1,0,1,1)  &  $23$  &  $21$  &  $9$   \\
(1,1,1,1,1,1,2,0,0),(0,0,0,0,0,1,1,1,1)  &  $35$  &  $28$  &  $9$   \\ 
(1,1,1,1,1,1,2,0,0),(0,0,0,0,1,1,1,1,1)  &  $42$  &  $34$  &  $10$   \\
(1,1,1,1,1,1,2,0,0),(0,0,0,1,1,1,1,1,1)  &  $23$  &  $21$  &  $9$   \\
(1,1,1,1,2,2,2,2,0),(0,0,1,1,0,0,0,1,1)  &  $33$  &  $24$  &  $9$   \\
(1,1,1,1,1,2,2,2,0),(0,0,1,1,0,0,0,1,1)  &  $17$  &  $15$  &  $9$   \\
(1,1,1,1,1,1,2,2,0),(0,0,0,0,1,1,0,1,1)  &  $17$  &  $15$  &  $9$   \\
(1,1,1,1,1,1,2,2,0),(0,0,0,1,1,1,0,1,1)  &  $9$  &  $9$  &  $9$   \\
(1,1,1,1,1,1,2,2,0),(0,0,0,1,1,1,0,0,1)  &  $38$  &  $29$  &  $9$   \\
(1,1,1,1,1,1,1,2,0),(0,0,0,0,1,1,1,0,1)  &  $37$  &  $32$  &  $9$   \\
(1,1,1,1,1,1,1,2,0),(0,0,0,0,0,1,1,1,1)  &  $41$  &  $35$  &  $9$   \\
(1,1,1,1,1,1,1,2,0),(0,0,0,0,1,1,1,1,1)  &  $9$  &  $9$  &  $9$   \\
(1,1,1,1,1,1,1,1,0),(0,0,0,0,1,1,1,1,1)  &  $32$  &  $29$  &  $9$   \\
\hline
\end{tabular}
\medskip
\caption{Non-cyclic cases for~$n=9$ and~$d=4\cdot 2$}
\label{tab:n9d42-table}
\end{table}

\subsection{Quotients of type 4$\cdot$2${}^\text{2}$}%
\label{subse4.2.2}

Here we may write $\Lb=\la\Lb',f,f',f''\ra$ where $f,f',f''$ have 
denominators $4$, $2$, and $2$. By the results above for quotients 
of type $4\cdot 2$, we know that we may assume that $f$ has 
component zero on $e_9$, and then replacing $f'$ by $f''$ or $f'+f''$, 
that $f'$ also have the same property. Hence the $8$-dimensional 
section $\Lb_0=\la e_1.\dots,e_8,f,f'\ra$ is of one of three types, 
characterized by $(m_1,m_2)=(4,3)$, $(5,2)$ or $(6,1)$ 
(see~\cite[Section~10 and Table~11.1]{martinet-2001}).

The third type defines only $\E_8$, and thus does not extend 
to a quotient $(4,2)$ in dimension~$9$ by Lemma~\ref{lemsecE8}. 
For the second one, 
the eutactic lattice in its minimal class is the $8$-dimensional 
{\em Watson lattice}, that is, the unique
integral lattice of minimum~$4$, with $s=75$.
(We refer to it as the ``Watson lattice'', 
as Watson proved that for $n=8$, either $s=120$, attained only 
by the root lattice $\E_8$, or $s\leq 75$; see~\cite{watson-1971b}).
Watson's lattice is the unique weakly eutactic lattice in its minimal
class. It has determinant~$512$, which implies 
$\g(\Lb)\ge2=\g(\Lb_9)$ by Proposition~\ref{prop:sumorth}. 
Thus conjecturally, $\Lb$ is similar to~$\Lb_9$, 
and indeed, we do find only one code, hence only the class of~$\Lb_9$. 
Finally, we find three codes extending the first type. 
Two of them again define the minimal class with $(s,r)=(89,43)$ 
already found for quotients of type $2^4$ and $4\cdot 2$, 
and one the perfect class with $s=81$, already found for quotients 
of type $2^4$. 
The Ryshkov polyhedron of the class with $(s,r)=(89,43)$ 
is a square with edges belonging to a same minimal class having 
$(s,r)=(90,44)$ and vertices belonging to the minimal class of~$\Lb_9$.
Our findings are subsumed in Table~\ref{tab:n9d422-table}.

\begin{table}
\begin{tabular}{|c|c|c|c|}
\hline
generators  &  $s$  &  $r$  &  $s'$  \\
\hline
\hline
(1,1,1,1,2,2,2,0,0),(0,0,1,1,0,0,1,1,0),(0,0,1,1,0,1,0,0,1)  &  $89$  &  $43$  &  $9$  \\
(1,1,1,1,2,2,2,0,0),(0,0,1,1,0,0,1,1,0),(0,1,0,1,0,1,0,0,1)  &  $81$  &  $45$  &  $9$  \\
(1,1,1,1,2,2,2,0,0),(0,0,1,1,0,0,1,1,0),(0,1,0,1,0,0,1,0,1)  &  $89$  &  $43$  &  $9$  \\
(1,1,1,1,1,2,2,0,0),(1,1,0,0,0,1,0,1,0),(0,0,0,0,0,1,1,1,1)  &  $136$  &  $45$  &  $12$  \\
\hline
\end{tabular}
\medskip \caption{Non-cyclic cases for $n=9$ and $d=4\cdot 2^2$}
\label{tab:n9d422-table}
\end{table}

\subsection{Computer calculations} 
\label{subsec:computer}

As mentioned at the beginning of Section~\ref{sec:noncyclic},
the full classification for~$n=9$ relies on computer calculations, 
using an implementation of Algorithm~\ref{alg:index-algorithm}.
In order to keep the necessary computations 
as low as possible, we used a program to systematically generate a list 
of possible cases. It uses the classification of codes for cyclic quotients for~$n=9$.
Note that for a type $d_1 \cdots d_k$ 
(with the $d_i$ having a common divisor greater~$1$)
to be realizable in dimension~$n$, 
all of the $k$ types $d_1 \cdots d_{i-1}\cdot d_{i+1} \cdots d_n$
have to be realizable.

For the cases to be treated, it suffices to consider $k=2$,
say types $d_1 \cdot d_2$, with cyclic types $d_1$ and $d_2$
both existing. 
We can run through all combinations of possible codes generated 
by $a= (a_1,\cdots,a_n) \in (\Z/d_1\Z)$ and 
by $b= (b_1,\cdots,b_n) \in (\Z/d_2\Z)$.
From our classification of cyclic cases, the 
$a_i$ and $b_i$ are assumed to be in $\{0,\ldots, \lfloor \frac{d_1}{2} \rfloor\}$,
respectively $\{0,\ldots, \pm \lfloor \frac{d_2}{2} \rfloor\}$.
This is due to the fact that we could exchange $e_i$ and $-e_i$ in a basis.
For one of the given vectors, say $a$, we may assume that this
property holds; we may moreover assume that the $a_i$ are in non-decreasing order.
For the $b_i$ however, we cannot make this assumption,
as we already used possible sign changes and changes of order of the vectors $e_i$ 
for ``the normalization'' of $a$. 
With each $b_i$, we therefore need to consider also $d_2-b_i$ 
(except when $b_i=0$ or $b_i = \frac{d_2}{2}$ and $d_2$ is even).
Moreover, we need to consider all orderings of the $b_i$ --
up to some symmetry within equal $a_i$ entries. For example,  
if $a_{i}= a_{i+1}\ldots = a_{i+l}$, we may assume
that $b_i,\ldots, b_{i+l}$ are in non-decreasing order.  
Note that we may still be faced with quite a lot of possibilities,
depending on the given choice of $a$ and $b$. 
Note also, that it may be advisable to change the 
roles of~$a$ and~$b$.
Another possibility to reduce the number of cases to be considered: 
For each case we can consider 
linear combinations $f = x\frac{a_1 e_1+\dots+a_n e_n}{d_1} + y\frac{b_1 e_1+\dots+b_n e_n}{d_2}$ 
with $x,y\in\Z$ and check (based on the classification of cyclic types)
if a corresponding lattice $\Lb = \la \Lb', f \ra$ could exist.

\smallskip

Using the strategy sketched above, 
we were able to exclude the types
$8\cdot 2$, $10\cdot 2$, $6\cdot 3$, $9\cdot 3$ and~$5 \cdot 5$
in dimension~$9$.
For the types~$6\cdot 2$ and~$4^2$ we were able to show existence.
We moreover obtained a complete classification of corresponding codes.
See Tables~\ref{tab:n9d62-table} and~\ref{tab:n9d44-table}.
From the classification of $6\cdot 2$, we can exclude 
the last remaining type $6\cdot 2^2$, as we explain below.
Our results were obtained using an implementation of Algorithm~\ref{alg:index-algorithm},
using {\tt MAGMA} scripts in conjunction with {\tt lrs}.
Our source code can be obtained from the online appendix of this paper,
contained in the source files of its arXiv version {\tt arXiv:0904.3110}.
We used a {\tt C++} program that systematically generated 
a list of possible cases as sketched above.

\begin{table}
\begin{tabular}{|c|c|c|c|}
\hline
generators  &  $s$  &  $r$  &  $s'$  \\
\hline
\hline
(0,1,1,1,1,2,2,2,3),(1,0,0,0,1,0,0,1,1)  &  $136$  &  $45$  &  $37$  \\
(0,1,1,1,1,2,2,2,3),(1,0,0,1,1,0,0,1,0)  &  $136$  &  $45$  &  $46$  \\
(0,1,1,1,1,2,2,3,3),(1,0,0,1,1,0,1,0,1)  &   $99$  &  $45$  &  $33$  \\
(0,1,1,1,1,2,2,3,3),(1,0,0,1,1,1,1,0,1)  &   $99$  &  $45$  &  $33$  \\
(0,1,1,1,1,2,2,3,3),(1,0,0,1,1,0,0,0,1)  &   $87$  &  $42$  &  $23$  \\
(0,1,1,2,2,2,2,3,3),(1,0,1,0,0,0,1,0,1)  &   $72$  &  $35$  &  $22$  \\
(0,1,1,1,2,2,2,2,3),(1,0,0,1,0,0,0,1,1)  &   $64$  &  $40$  &  $33$  \\
(0,1,1,1,2,2,2,2,3),(1,0,1,1,0,0,0,1,0)  &   $64$  &  $40$  &  $33$  \\
(0,1,1,1,2,2,2,3,3),(1,0,0,1,0,0,1,0,1)  &   $41$  &  $34$  &  $23$  \\
\hline
\end{tabular}
\medskip \caption{Non-cyclic cases for $n=9$ and $d= 6\cdot 2$}
\label{tab:n9d62-table}
\end{table}

For codes of type $d=6\cdot 2$, the computer assisted calculation 
output was $23$ codes, which we had to check for equivalence. 
Write $\Lb=\la\Lb',e,f\ra$ with 
$$e=\frac{a_1 e_1+\dots+a_9 e_9}6\ \nd\ f=\frac{b_1 e_1+\dots+b_9 e_9}2\,.$$ 
with $a_i\in\{0,1,2,3\}$ and $b_i\in\{0,1\}$. 
Replacing $e$ by $e+f$ or $2e+f$, we obtain 
(after reduction modulo~$6$ and sign changes of some $e_i$) 
three sets $(t_i)$ where $t_i$ is the number of $a_j$ equal to~$i$ 
in the numerator of~$e$. Two equivalent $\Z/6\Z$-codes must have the 
same sets $(t_i)$. For codes having the same sets $(t_i)$, 
we were able to make a canonical choice of an $e'$ among 
$e$, $e+f$ and $2e+f$, constructing this way two new $\Z/6\Z$-codes 
(if one of them is defined by a pair $(e',f')$, the other one 
corresponds to $(e',3e'+f')$). Given two pairs $(e',f')$ 
and $(e',f'')$, we checked whether a convenient permutation 
of the coordinates could transform $f''$ or $f''+3e'$ into $e'$. 
The result is that the $23$ codes found by the computer were 
classified up to equivalence by their three sets $(a,b,c,d)$, 
which reduced our list to only $9$ classes of codes. 

In this list, there are three pairs of lattices having the same 
kissing number ($s=136$, $s=99$, $s=64$). In each case, the matrices 
found by the computer define lattices which are isometric, 
thus defining the same minimal class. 

Quotients of type $6\cdot 2^2$ can be easily ruled out by the 
classification of type $6\cdot 2$. 
Writing now $\Lb=\la\Lb',e,f,f'\ra$, we see on Table~\ref{tab:n9d62-table} 
that we may choose $e$ such that $a_1=0$. Then replacing if necessary 
$f$ by $f'$ or $f+f'$, we may assume that $b_1=0$. But this implies 
the existence of an $8$-dimensional lattice having a quotient 
of type $6\cdot 2$, a contradiction.

\begin{table}
\begin{tabular}{|c|c|c|c|}
\hline
generators  &  $s$  &  $r$  &  $s'$  \\
\hline
\hline
(1,1,1,1,2,2,2,0,0),(0,1,-1,2,2,0,1,2,1) & 81 & 45 & 9 \\
\hline
\end{tabular}
\medskip
\caption{Non-cyclic case for $n=9$ and $d=4^2$}
\label{tab:n9d44-table}
\end{table}

The $4^2$ case is very special. There is only one lattice~$L_{81}$
in the minimal class, which therefore is perfect. 
A Gram matrix is for example
$$
{\small
\begin{pmatrix}
4&1&1&1&2&2&2&0&2\\ 1&4&0&0&0&0&0&0&1\\ 1&0&4&0&0&0&0&0&-1\\ 1&0&0&4&0&0&0&0&2\\ 2&0&0&0&4&0&0&0&2\\ 2&0&0&0&0&4&0&0&0\\ 2&0&0&0&0&0&4&0&1\\ 0&0&0&0&0&0&0&4&2\\ 2&1&-1&2&2&0&1&2&4\\
\end{pmatrix}
}      
$$
The lattice $L_{81}$ and its {\em dual lattice} 
are strongly eutactic, that is, their sets of minimal vectors 
are {\em spherical $3$-designs} (see \cite{martinet-2003}).
Since it is perfect, it is also extreme and dual-extreme by Voronoi's theorem.
The complete index system of~$L_{81}$ is described in Appendix~B.

%
%
%

\section{Universal lattices}\label{secuniversal} 

In this section, we consider lattices $\Lb$ which are {\em universal} 
(for their dimension~$n$) in the following sense: 
with our usual notation, every quotient $L/L'$ which exists 
in dimension~$n$ exists with $L=\Lb$ for a convenient choice 
of $\Lb'$ generated by minimal vectors of~$\Lb$.

\begin{theorem}\label{thuniv} 
For dimensions $n=1,\ldots, 9$, the universal lattices in the sense 
above are as follows (as usual up to similarity): 
\begin{enumerate} 
\item 
All lattices if $n=1$, $2$ or $3$. 
\item 
The root lattice $\D_4$ if $n=4$. 
\item 
All lattices $L$ with $\imath(L)=2$ and $s(L)\ge 6$ if $n=5$. 
\item 
None if $n=6$ or $n=9$. 
\item 
The lattices $\E_7$, $\E_8$ if $n=7,8$. 
\end{enumerate}\end{theorem}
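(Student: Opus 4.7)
The plan is to verify the five cases of the theorem separately, using Theorem~\ref{thm:structquot} together with the classifications of Sections~\ref{sec:cyclic} and~\ref{sec:noncyclic} for $n=9$ and the results of \cite{martinet-2001} for $n\le 8$. For $n\le 3$, Theorem~\ref{thm:structquot} lists no non-trivial Abelian quotient, so every well-rounded lattice is universal vacuously.

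For $n=4$ and $n=5$, the only non-trivial quotient group is $\Z/2\Z$. In dimension~$4$, universality is equivalent to $\imath(L)=2$, which via the code $(1,1,1,1)_2$ and the averaging argument of Remark~\ref{rem:averaging} forces the $e_i$ to be pairwise orthogonal, identifying $\D_4$ as the unique universal lattice. In dimension~$5$ two distinct codes produce the same Abelian quotient, namely the trivial extension of $(1,1,1,1)_2$ and the weight-$5$ code $(1,1,1,1,1)_2$. A universal lattice must realize both; the lattice $\Z^5 + \Z\cdot\tfrac12(e_1+\cdots+e_5)$ (with $s=5$) realizes only the weight-$5$ code, since $(e_1+e_2+e_3+e_4)/2$ does not lie in it. Hence universality is equivalent to the combined condition $\imath(L)=2$ together with $s(L)\ge 6$, the extra minimal vectors providing the room to realize the weight-$4$ extension as well.

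For $n=7$ and $n=8$, I would verify directly that $\E_7$ and $\E_8$ realize every code in their respective index systems. Using the classifications of \cite{martinet-2001} together with the explicit descriptions of these root lattices, this is a direct check exploiting the rich coset structure of $\E_7$ and $\E_8$ and the fact that $s(\E_7)=63$ and $s(\E_8)=120$ leave ample room to match every code configuration.

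For the non-existence statements at $n=6$ and $n=9$, the strategy is to show that two specific indices cannot be jointly realized. In dimension~$6$, the unique minimal class realizing index~$3$ has invariants $(s,r)=(12,11)$ (see the Index~3 analysis at the end of Section~\ref{sec:cyclic}), while the unique class realizing index $2^2$ comes from the $(4^3)$ binary code with $(s,r)=(30,21)$ via Proposition~\ref{propcomplete1}. The key step is to show the two classes admit no common upper class in the $\prec$-ordering, which I would establish by combining Proposition~\ref{prop:sumorth} on orthogonal decompositions with the known value $\gamma_6^6=64/3$ to bound the Hermite invariant of any hypothetical common lattice to a contradiction. For $n=9$, the argument is cleaner: by Table~\ref{tab:n9d44-table} the unique code of type $4^2$ is realized by a perfect lattice $L_{81}$ with $(s,r)=(81,45)$, and by Proposition~\ref{prop:smallclass} any lattice realizing $4^2$ has minimal class $\succ$ that of $L_{81}$. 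But $r=\binom{10}{2}=45$ means $L_{81}$ is perfect, so its class is a vertex of the Ryshkov polyhedron and hence maximal in $\prec$, forcing any such lattice to be similar to $L_{81}$. Now $s(L_{81})=81$, while Table~\ref{tab:cyclic} shows that every lattice realizing $\Z/12\Z$ has $s\in\{87,136\}$, so $L_{81}$ does not realize index~$12$, and no dimension-$9$ universal lattice exists. The main obstacle will be the $n=6$ non-existence argument, which requires a genuine incompatibility analysis between two specific minimal classes; the $n=9$ case, though ultimately relying on the computer-generated classification tables, is conceptually transparent once the perfectness of $L_{81}$ is singled out.
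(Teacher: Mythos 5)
Your overall architecture ($n\le 3$ vacuous, $n=4$ via uniqueness of $\D_4$, $n=9$ via the incompatibility of the $4^2$ quotient with cyclic index $12$) matches the paper, and your $n=9$ argument is essentially the paper's own: $4^2$ forces the perfect class of $L_{81}$ with $s=81$, while index $12$ forces $s\ge 87$. But three of your cases have genuine problems. First, for $n=6$ your proposed incompatibility argument does not close. You correctly note that the $2^2$ class is the perfect class of $\D_6$, so any lattice realizing $2^2$ is similar to $\D_6$; but the Hermite-constant estimate you then invoke gives nothing: a lattice realizing both $3$ and $2^2$ would only need $\g(L)^6\ge 16$, which is comfortably below $\g_6^6=64/3$, so there is no numerical contradiction to be had from Proposition~\ref{prop:sumorth} and the value of $\g_6$. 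What is actually needed (and what the paper uses) is the concrete fact that the index system of $\D_6$ is $\{1,2,2^2\}$, i.e.\ that $\D_6$ admits no sublattice of index $3$ generated by minimal vectors (cf.\ the Proposition on $\cI(\D_n)$ in Appendix~A), combined with the existence of lattices of index $3$ such as $\E_6$. You flag this case as your ``main obstacle,'' and indeed as written it is a gap, not a proof.

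Second, for $n=7,8$ you only argue that $\E_7$ and $\E_8$ \emph{are} universal; the theorem also asserts they are the \emph{only} universal lattices. The missing ingredient is that index $8$ in dimension $7$ and index $16$ in dimension $8$ occur only for $\E_7$ and $\E_8$ respectively (Table~11.1 of \cite{martinet-2001}), which forces any universal lattice to be one of these. Third, your $n=5$ argument rests on a false premise: universality is defined in terms of the abstract group structures of the quotients $L/L'$, not in terms of codes, so a universal lattice need \emph{not} realize both the weight-$4$ and the weight-$5$ binary codes --- it need only realize the structures $1$ and $\Z/2\Z$. In fact a lattice of the form $L=\la L', \tfrac12(e_1+\dots+e_5)\ra$ with $s(L)\ge 6$ and non-orthogonal $e_i$ is universal without realizing the weight-$4$ code. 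The correct content of the hypothesis $s(L)\ge 6$ is that it guarantees a basis of minimal vectors (so that index $1$ is realized): when $\ell=5$ any extra minimal vector is of the form $f=\tfrac12(\pm e_1\pm\dots\pm e_5)$, and then $(f,e_2,\dots,e_5)$ is a basis of $L$ consisting of minimal vectors; whereas if $\ell=s(L)=5$ then $S(L)=S(L')$ and only index $2$ occurs. Your conclusion for $n=5$ is right, but the reasoning as written would, if pursued, exclude lattices that the theorem includes.
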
 

\begin{proof} 
\underbar{$n\le 3$}. There is nothing to prove since 
the maximal index is~$1$. 

\smallskip\noi\underbar{$n=4$}. 
The maximal index is equal to~$2$ 
only for $\D_4$, which also admits index~$1$ since $\D_4$ has 
bases of minimal vectors. 

\smallskip\noi\underbar{$n=5$}. The maximal index is again~$2$ 
in dimension~$5$, so that we must have $\imath(L)=2$. 
This implies that $L$ may be written in the form 
\linebreak 
$L=L'\cup (f+L')$ 
where $f=\frac{e_1+\dots+e_\ell}2$, $\ell=4$ or~$5$, 
and $L'=\la e_1,\dots,e_5\ra$ has index~$2$ in~$L$. 

If $\ell=4$, then both the conditions ``$s\ge 6$'' and 
``$1$ is an index'' are satisfied. 

If $\ell=s(L)=5$, then $S(L)=S(L')$ and $2$ is the only index 
for~$L$. If $\ell=5$ and $s(L)\ge 6$, there exists some minimal vector 
$f\ne \pm e_i$. If $f\in L'$, then $\ell<5$. Hence $f$ belongs 
to $f+L'$, and is of the form $f=\frac{a_1 e_1+\dots+a_5 e_5}2$. 
We have $\abs{a_i}\le 2$ because $\imath(L)\le 2$, 
and $a_i\ne 0, \pm 2$ because $\ell=5$. Hence $(f,e_2,e_3,e_4,e_5)$ 
is a basis for~$L$. 

\smallskip\noi\underbar{$n=6$}. 
The maximal index is~$4$, attained uniquely on $\D_6$. 
This lattice has index system $\{1,2,2^2\}$. Since there exist 
lattices with $\imath=3$ (e.g.~$\E_6$), there is no universal 
lattice in this dimension. 

\smallskip\noi\underbar{$n=7,8$}. 
It results from Table~11.1 of \cite{martinet-2001} that index~$8$ for $n=7$ and 
index~$16$ for~$n=8$ occur only for $\E_n$. Using the classification 
of root systems, it is then easy to list all well-rounded sublattices 
of minimum~$2$ of $\E_7$ and $\E_8$, see \cite{martinet-2001}, Section~6, 
and then to check that they realize all quotients in their 
dimensions. 

\smallskip\noi\underbar{$n=9$}.
Quotients $4^2$ occur only on the similarity class of the perfect lattice~$L_{81}$ 
described at the end of the previous section, whereas cyclic 
quotients of order~$12$ occur only on similarity classes with $s \ge 87$, 
as described in Section~\ref{sec:cyclicd12}. 
See also Table~\ref{tab:Table81_87_99} in Appendix~B.
\end{proof}

\begin{remark}\label{rem:lambda9_almost_universal} 
The lattice $\Lb_9$ is almost universal. It realizes all types in 
dimension~$9$, except $4^2$. 
\end{remark}

\begin{proof} 
We must show that all quotients listed 
in Theorem~\ref{thm:structquot}, except $4^2$, 
do occur as quotients of~$\Lb_9$. 
This is clear for those which belong 
to the index system of~$\E_8$ since $\Lb_9$ has a cross-section 
proportional to~$\E_8$. It thus remains to consider quotients 
which are either cyclic of order $7,8,9,10$ and $12$ or of type 
$6\cdot 2$ or $4\cdot 2^2$. Luckily, this problem can be solved 
by a mere inspection of the codes found for dimension~$9$: 
indeed, for each of these quotients, there exists at least one code 
for which $\Lb_9$ is the only admissible lattice. 
Here is a list of such codes, given with the notation 
of \ref{notacyclic} in cyclic cases 
and by the components of generators otherwise. 

\noi Type $(7)$\,: $(6,1,2)_7$\,; 

\noi Type $(8)$\,: $(4,3,2,0)_8$\,; 

\noi Type $(9)$\,: $(4,1,2,2)_9$\,; 

\noi Type $(10)$\,: $(2,4,2,0,1)_{10}$\,; 

\noi Type $(12)$\,: $(2,1,2,2,1,1)_{12}$\,; 

\noi Type $(6\cdot 2)$\,: 
$(0,1^4,2^3,3)_6,\,(1,0^3,1,0^2,1^2)_2$\,; 

\noi Type $(4\cdot2^2)$\,: 
$(1^5,2^2,0^2)_4,\,(1^2,0^3,1,0,1,0)_2,\,(1^2,0^4,1,0,1)_2$\,. 
\end{proof}

We do not know any result of this kind for larger dimensions. 
Note that Remark~\ref{remsmallindex} shows that a $24$-dimensional 
universal lattice, if any, must be the Leech lattice. 
In dimension~$10$, a possible universal lattice is provided 
by the lattice $\la\E_8,\D_{10}\ra$, which has quotients of order~$32$ 
and of the three types $2^5$, $4\cdot 2^3$ and $4^2\cdot 2$; it has a cross-section 
$\Lb_9$, but we do not even know whether all quotients of $\Lb_{10}$ 
occur for this lattice.

%
%

\begin{appendix}
 
\section*{Appendix A: Some perfect lattices} 

As usual, the notation $\A_n$, $\D_n$, $\E_n,\,n=6,7,8$ 
stands for the standard irreducible {\em root lattices}, 
the definitions of which we recall below. 
Their importance stems from Witt's theorem, 
which asserts that {\em integral 
\linebreak 
lattices generated by vectors of norm~$2$ 
are orthogonal sums of lattices isometric to $\A_n,\,n\ge 1$, 
$\D_n,\,n\ge 4$, or $\E_n,\,n=6,7,8$.} 
Denoting by $(\vp_0,\vp_1,\dots,\vp_n)$ the canonical basis 
for $\Z^{n+1}$ and by $(\vp_1,\dots,\vp_n)$ that of $\Z^n$, 
we set 
$$\A_n=\left\{x\in\Z^{n+1}\mid \sum_{i=0}^{n+1}\,x_i=0\right\}\ 
\nd\ 
\D_n=\left\{x\in\Z^n\mid\sum_{i=1}^n\,x_i\equiv 0\mod 2\right\}$$ 
(we consider $\A_n$ for $n\ge 1$ and $\D_n$ for $n\ge 2$, 
but $\D_2\simeq\A_1\perp\A_1$ and $\D_3\simeq\A_3$). 
For all $n\ge 8$ even, we then set 
$$\D_n^+=\la\D_n,\frac{\vp_1+\dots+\vp_n}2\ra= 
\D_n\cup \left(\frac{\vp_1+\dots+\vp_n}2+\D_n \right)\,,$$ 
and $\E_8=\D_8^+$ (but $\D_n^+$ is not a root lattice for $n>8$), 
and finally define $\E_7$ and $\E_6$ as the orthogonal complement in~$\E_8$ 
of the spans of $\vp_7+\vp_8$ and $\{\vp_6+\vp_7,\vp_7+\vp_8\}$ respectively. 

Note that $\A_n$ has a nice characterization in terms of its index 
system, by a 1877 theorem of Korkine and Zolotareff: 
it is {\em the} $n$-dimensional lattice with $s\ge\frac{n(n+1)}2$ 
and maximal index~$1$. For $\D_n$, we quote the following property: 

\begin{Appprop}\label{propsysDn} 
For all $n\ge 2$, the index system of $\D_n$ is 
$$\cI(\D_n)=\{1,2,\dots,2^t\}\ ,$$ where $t=\lf\frac{n-2}2\rf$. 
\end{Appprop}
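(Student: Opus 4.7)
Plan: I will prove both halves of the statement: that every group $(\Z/2\Z)^k$ with $0\le k\le t$ arises as some $\D_n/\Lb'$, and that no other abelian group can appear. For realization, for each such $k$ I would consider the sublattice
$$\Lb'_k = \la\vp_{2i-1}\pm\vp_{2i}:1\le i\le k\ra\perp \D_{n-2k}$$
of $\D_n$, where the $\D_{n-2k}$-factor sits on $\vp_{2k+1},\dots,\vp_n$ with its usual basis of simple roots; this is legitimate since $k\le t$ forces $n-2k\ge 2$. A determinant computation gives $[\D_n:\Lb'_k]=2^k$, and a direct check shows that the quotient is $(\Z/2\Z)^k$.

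For the upper bound, let $\Lb' = \Z e_1 + \dots + \Z e_n$ be generated by $n$ independent minimum vectors $e_k = \pm\vp_{p_k}\pm\vp_{q_k}$ of $\D_n$, and attach to these data the multigraph $G$ on $\{1,\dots,n\}$ with edges $\{p_k,q_k\}$. Since no minimum vector of $\D_n$ has the form $\pm 2\vp_i$, the graph $G$ has no loops; isolated vertices are excluded by linear independence of the $e_k$. A leaf-pruning argument on the coefficient matrix of the $e_k$ then shows that $\Q$-independence forces each connected component $G_j$ of $G$ to have as many edges as vertices, i.e.\ to be unicyclic, with its unique cycle being \emph{unbalanced} in the sense that the cycle-edges are themselves $\Q$-linearly independent.

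The key step is to prove $2\Z^n\subseteq\Lb'$, which immediately implies that $\D_n/\Lb'$ is a quotient of $\D_n/2\Z^n\cong(\Z/2\Z)^{n-1}$ and hence is $2$-elementary. Fix a component with unbalanced cycle $v_1,\dots,v_\ell,v_1$ and cycle edges $f_i=\alpha_i\vp_{v_i}+\beta_i\vp_{v_{i+1}}$. I would choose signs $\sigma_i\in\{\pm 1\}$ recursively by $\sigma_1=1$ and $\sigma_i\alpha_i=-\sigma_{i-1}\beta_{i-1}$, so that the $\vp_{v_j}$-coefficient of $\sum\sigma_i f_i$ vanishes for $j\ge 2$; the unbalanced condition then forces the remaining $\vp_{v_1}$-coefficient to be $\pm 2$, placing $2\vp_{v_1}$ in $\Lb'$. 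Since any other vertex $w$ of the same component is linked to $v_1$ by edges of $G_j$, the identity $2e\mp 2\vp_u=\pm 2\vp_w$ valid for an edge $e=\pm\vp_u\pm\vp_w$ propagates doubling through the component and yields $2\vp_i\in\Lb'$ for every $i$.

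Finally, to count the index, observe that the image of $\Lb'$ in $\Z^n/2\Z^n\cong(\Z/2\Z)^n$ is the $\Z/2\Z$-span of the weight-two edge words $\vp_{p_k}+\vp_{q_k}$, which is the subspace cut out by one even-weight relation per connected component of $G$; its dimension is $n-c$, where $c$ denotes the number of components of $G$. Combined with $2\Z^n\subseteq\Lb'$ this gives $[\Z^n:\Lb']=2^c$, whence $[\D_n:\Lb']=2^{c-1}$ and $\D_n/\Lb'\cong(\Z/2\Z)^{c-1}$. Since each component of $G$ has at least two vertices, $c\le\lfloor n/2\rfloor$, so the rank of the quotient is at most $\lfloor n/2\rfloor-1=\lfloor(n-2)/2\rfloor=t$. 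The main obstacle will be the sign bookkeeping in the unbalanced-cycle argument, which in the end reduces to an elementary parity computation on the product of the $\alpha_i\beta_i$ around the cycle.
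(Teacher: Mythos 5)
Your proof is correct, but it takes a genuinely different route from the paper's. The paper's sketch invokes Witt's theorem: a full-rank sublattice of $\D_n$ generated by roots is an orthogonal sum of irreducible root lattices, and one then rules out $\E_m$-components and all $\A_m$-components except those occurring inside a $\D_2=\la\vp_i+\vp_j,\vp_i-\vp_j\ra$ or a $\D_3=\la\vp_1-\vp_2,\vp_2-\vp_3,\vp_1+\vp_3\ra$, by checking that otherwise the orthogonal complement fails to be a root sublattice of $\D_n$; the conclusion is that $\Lb'\simeq\D_{n_1}\perp\dots\perp\D_{n_k}$ with all $n_j\ge 2$, whence index $2^{k-1}$ and a $2$-elementary quotient. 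You bypass the classification of root systems entirely with a signed-graph argument: independence forces each component of your multigraph to be unicyclic with unbalanced cycle, the telescoping sum around the cycle puts $2\vp_{v_1}$ (hence $2\Z^n$) inside $\Lb'$, and the index is read off as $2^{c-1}$ from the $\mathbb{F}_2$-span of the edge words. The two proofs are secretly the same decomposition seen from different ends — an unbalanced unicyclic component on $v$ vertices generates exactly one of the paper's $\D_v$-summands — but yours is self-contained and elementary, at the cost of some sign bookkeeping, while the paper's is shorter given Witt's theorem and fits the root-lattice framework used throughout its Appendix~A. Both yield the realization half by the same family $\D_2^{\perp k}\perp\D_{n-2k}$, $k\le\lf\tfrac{n-2}{2}\rf$.
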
 

\begin{proof}[Sketch of proof] 
By the classification of root systems, 
a strict sublattice of $\D_n$ of rank~$n$ is an orthogonal sum 
of irreducible root lattices $L_1,\dots,L_k$ of dimensions 
$n_1,\dots,n_k<n$ which add to~$n$. 
Embeddings $\E_m\hookrightarrow\D_n$ are impossible (see \cite[Section~4.6]{martinet-2002}).
For $m\ne 1,3$, embeddings $\A_m\hookrightarrow\D_n$ are equivalent 
modulo an automorphism of $\D_n$ to 
$\A_m\to L=\la\vp_1-\vp_2,\dots,\vp_m-\vp_{m+1}\ra$, 
and must be discarded because $L^\perp$ is not a root 
sublattice of~$\D_n$. For $m=3$, there is a second orbit, 
namely that of $\A_3\to L=\la\vp_1-\vp_2,\vp_2-\vp_3,\vp_1+\vp_3 \ra$, 
for which we have $L^\perp\simeq\D_{n-3}$; we denote by $\D_3$ 
this kind of embedding of~$\A_3$. Finally, $\A_1\perp\A_1$ embeds 
as $\la\vp_i\pm\vp_j\ra$, which yields an orthogonal decomposition 
$\D_2\perp\D_{n-2}$ where we denote by $\D_2$ any $\A_1\perp\A_1$ 
embedded as $\la\vp_i+\vp_j, \vp_i-\vp_j\ra$. With these 
definitions of $\D_2$ and $\D_3$, we prove inductively that root 
sublattices of $\D_n$ are obtained taking $L_i=\D_i$, 
and the proof of the proposition is now easily completed. 
\end{proof}

\smallskip 

The {\em laminated lattices $\Lb_n$} were defined inductively 
by Conway and Sloane; see \cite[Chapter~6]{cs-1998}. 
They have minimum~$4$, they are integral in the range $1\le n\le 24$, 
uniquely defined except for $n=11,12,13$, and for $n\le 8$, 
they are scaled copies of 
$\A_1$, $\A_2$, $\A_3$, $\D_4$, $\D_5$, $\E_6$, $\E_7$, $\E_8$. 

\smallskip 

For all even $m\ge 8$ and all $n\ge m$, 
the lattices $\la\D_m^+,\D_n\ra$ (Barnes's lattices $\D_{n,m}$; 
see \cite[Section~5.5]{martinet-2003}) have minimum~$2$. They are integral 
if $m=n\equiv 0\mod 4$, only half-integral otherwise, 
hence become integral in the scale which give them minimum~$4$. 
In particular, 
$$\Lb_9=\la\D_8^+,\D_9\ra =\la\E_8,\D_9\ra 
\text{ scaled to minimum~$4$}\,.$$ 

\smallskip

Here are unified constructions for the three perfect lattices which were
found directly in our classification.
There are four $[10,5,4]$-codes. They lift over $\Z^{10}$ to four
$10$-dimensional lattices:
$L_a\sim\D_{10}^+$ ($s=90$),
$L_b\sim\la\E_8,\D_{10}\ra$ ($s=154$),
$L_c$ ($s=138$, not $K_{10}$), and
$L_d\sim Q_{10}$ ($s=130$, the Souvignier lattice).

Among the densest cross-sections of $L_b$, $L_c$, $L_d$ we find the lattices
$\Lambda_9$, $L_{99}$ and $L_{81}$, respectively.
(For $L_a$ we obtain the lattice with $s=57$ of Table~\ref{tab:n9d2222-table}.)

The three perfect lattices above are indeed eutactic and hence extreme by Voronoi's theorem: 
this is clear for $\Lambda_9$, which contains~$\D_9$ scaled to 
minimum~$4$ and for $L_{81}$, which is strongly eutactic.
For the lattice~$L_{99}$  
we have verified eutaxy using by a computer calculation; 
see Section~\ref{sec:cyclicd12}.

\smallskip

The laminated lattice $\Lb_{24}$ is known as the {\em Leech lattice}
and has many remarkable properties.
By a recent theorem 
of Cohn and Kumar \cite{ck-2004}, we have $\g_{24}=4$ and the only 
lattice attaining $\gamma_{24}$
(up to similarity) is the Leech lattice 
$\Lb_{24}$, an integral lattice of minimum~$4$ and determinant~$1$, 
whence $\g_{24}^{12}=2^{24}$. It follows that $\Lb_{24}$
is the unique lattice in dimension~$24$ that 
satisfies the index bound of Proposition~\ref{propmaxind} with equality:
By the proof of Conway's uniqueness theorem for the Leech lattice
(see \cite[Chapter~12]{cs-1998}),   
every class of $\Lb_{24}\mod 2$ has a representative of norm 
at most $2\min\Lb_{24}=8$, and norm~$8$ vectors occur in $24$ pairs 
$\pm x$, which implies by~\cite[Theorem~2.5]{martinet-2002} 
that $\Lb_{24}$ contains a sublattice $L$ which is a scaled copy 
with minimum~$4$ of $\D_{24}$. We have $\det(L)=4\cdot 2^{24}$, 
hence $[\Lb_{24}:L]=2^{13}$. Now the root lattice $\D_n$, $n=2m$ even, 
contains orthogonal frames, which span lattices of index 
$2^{m-1}$ in~$\D_n$. This shows that $L$ contains to index $2^{11}$ 
a lattice $\Lb'$ generated by minimal vectors of $\Lb_{24}$, 
and we have $[\Lb_{24}:\Lb']=2^{13}\cdot 2^{11}=2^{24}$ 
showing that the bound $\imath(\Lb) \leq \lfloor \g_{24}^{12}\rfloor$ is tight.
This implies the known result 
(see \cite{bcs-1995}) that the Leech lattice 
can be constructed as the pull-back of a code of length~$24$ over $\Z/4\Z$.

%
%

\section*{Appendix B: Enumerating independent subsets of shortest vectors, by Mathieu Dutour Sikiri\'c}

In the present paper, the authors consider 
a pair $(\Lb,\Lb')$ of a $9$-dimensional lattice $\Lb$ 
and one of its Minkowskian sublattices, and classify all 
the $\Z/d\Z$ codes associated with the quotient $\Lb/\Lb'$.
They in particular obtain all possible structures of $\Lb/\Lb'$ 
as an Abelian group. However, given a lattice $\Lb$, 
the question of how to compute its index system~$\cI(\Lb)$ 
(in the sense of Definition~2.2) is left aside. 

\smallskip 

For a given lattice $\Lb$ of dimension~$n$ and (half) kissing 
number~$s$, a straightforward approach would be to consider all of the
$\binom s n$ possible bases of Minkowskian sublattices.
Except for the lattice~${\mathsf E}_8$, this approach works
for all perfect lattices for $n\le 8$, where $s\leq 75$ (see~\cite{martinet-2001}).
In dimension~$9$ however, 
several interesting lattices cannot be handled by this naive approach.
This is in particular true for the interesting lattices $L_{81}$ and $L_{99}$ 
described in Sections~\ref{sec:cyclic} and~\ref{sec:noncyclic}.

\smallskip 

In this appendix, I shall describe shortly an algorithm which outputs 
the index system of some lattices with a large kissing number.
The results I obtained for $L_{81}$ and $L_{99}$ are displayed in Table~\ref{tab:Table81_87_99}.
I also consider the non-perfect lattice $L_{87}$ of perfection rank~$42$ 
and maximal index~$12$  (see Section~\ref{sec:cyclic}).
The minimal class of  $L_{87}$  lies below 
that of $L_{99}$, so that every index which occurs 
for $L_{99}$ already occurs for $L_{87}$.

\smallskip

We denote by $\Aut(\Lb)$ the group of lattice automorphism of $\Lb$.
We split $S(\Lb)$ (the set of minimal vectors of $\Lb$)
into pairs of antipodal vectors $\{v_1, -v_1\}$, \dots, $\{v_s, -v_s\}$ and define $S^{1/2}(\Lb)=\{v_1, \dots, v_s\}$.
The group $\Aut(\Lb)$ induces an action on the $s$ antipodal pairs
and thus defines a permutation group $\Aut^{1/2}(\Lb)$ on $s$ elements of $S^{1/2}(\Lb)$.
If $\Lb$ does not admit a decomposition $\Lb_1\perp \Lb_2$ into two
orthogonal sublattices then the order of $\Aut^{1/2}(\Lb)$ is half the order of $\Aut(\Lb)$.
Denote by $I_k(\Lb)$ the list of inequivalent representatives of orbits of 
independent subsets with $k$ elements of $S^{1/2}(\Lb)$ under $\Aut^{1/2}(\Lb)$.

We need to determine $I_n(\Lb)$, but it turns out that the only known method
requires enumerating $I_k(\Lb)$ for $k\leq n$ as well.
Given $I_k(\Lb)$, for all $S\in I_k(\Lb)$ we consider
all possible ways to add one vector to $S$ and get an independent system.
By keeping only inequivalent representatives, we get in this way $I_{k+1}(\Lb)$.
The basic problem is to be able to test if two subsets of $S^{1/2}(\Lb)$
are equivalent under the group $\Aut^{1/2}(\Lb)$.
There exist backtracking method for this purpose, that are known to work
well in practice (see \cite[Chapter 9]{seress-2003}). Using these techniques, 
we find $|I_8(\mathsf{E}_8)|=1943$ for the highly symmetric $\mathsf{E}_8$ lattice.

The basic problem of this method is that we have to store $I_k(\Lb)$
in memory and that the number of equivalence tests grows quadratically in the
size of $I_k(\Lb)$.
To overcome these difficulties we use an ``orderly generation'' approach, a classic
technique of combinatorial enumeration (see for example~\cite{mckay-1998}).

If $S\in I_n(\Lb)$ then we choose $S$ to be lexicographically minimal
in its orbit under $\Aut^{1/2}(\Lb)$ and write it as $S=\{x_1, \dots, x_n\}$
with $x_1 < x_2 < \dots < x_n$.
Then the
sets $S_k=\{x_1, \dots, x_k\}$ for $1\leq k\leq n$ are lexicographically minimal
in their respective orbits as well.
Reversely, suppose we have all lexicographically minimal representatives
in $I_k(\Lb)$,
then for all $S_k=\{x_1,\dots, x_k\}\in I_k(\Lb)$ we consider all sets
\begin{equation*}
S_k(t)=S_k\cup \{t\}\mbox{~for~} t\in \{x_k+1, \dots, s\}.
\end{equation*}
and we test for all of them if they are minimal in their orbit $O(S_k(t))$
under $\Aut^{1/2}(\Lb)$ by computing all elements of $O(S_k(t))$.
If they are minimal then they are added to the set $I_{k+1}(\Lb)$.
Obviously
this method is limited by the size of the group and is not appropriate for
$\mathsf{E}_8$ or $\Lb_9$.

\smallskip

\begin{table}
\begin{center}
\begin{tabular}{|c||c|c|c|}
\hline
lattice $\Lb$   & $L_{81}$  & $L_{87}$ & $L_{99}$\\\hline
$| \Aut^{1/2}(\Lb) |$ & 18432 & 6144 & 9216\\
\hline
\hline
$1$    & 3774844 & 16730092 & 49301288\\
$2$    & 474881  & 2657720  & 8271400\\
$3$    & 28768   & 198528   & 681759\\
$4$    & 6634    & 46390    & 163090\\
$2^2$  & 4579    & 28560    & 88407\\
$5$    & 348     & 2859     & 12126\\
$6$    & 205     & 2171     & 8462\\
$7$    & 3       & 59       & 230\\
$8$    & 7       & 49       & 169\\
$4,2$  & 57      & 212      & 597\\
$2^3$  & 32      & 132      & 309\\
$9$    & --       & 4        & 4\\
$3^2$  & --       & 1        & 12\\
$10$   & --      & 5        & 5\\
$12$   & --       & 1        & 1\\
$6,2$  & --       & 1        & 10\\
$4^2$  & 1       & --       &  --\\
$4,2^2$& 1       & --        & --\\
$2^4$  & 1       & --        & --\\
\hline
\end{tabular}
\end{center}
\medskip
\caption{The number of orbits of bases of Minkowskian sublattices, 
for each of the~$19$ possible index types in dimension 9 and
for the special lattices~$L_{81}$, $L_{87}$ and~$L_{99}$}
\label{tab:Table81_87_99}
\end{table}

Once the sets $I_n(\Lb)$ are built, we use the Smith Normal Form for
each element (basis of a Minkowskian sublattice $\Lb'$), 
to determine the invariant of the Abelian group $\Lb/\Lb'$.

In our, obviously non-optimal, implementation we store the sets $I_k(\Lb)$
on disk and we use the {\tt GMP} library for exact arithmetic and a {\tt C} program that builds
$I_{k+1}(\Lb)$ from $I_k(\Lb)$. The Smith Normal Form computation is done in
{\tt GAP}. 
The running time is always less than 1 week.
The program is part of the {\tt GAP} package {\tt polyhedral} \cite{polyhedral}.

\end{appendix}

%
%

\section*{Acknowledgments}

The authors thank Gilles Z\'emor for suggesting the example
of a complete code described in Proposition~\ref{propcomplete2}.
They thank Mathieu Dutour Sikiri\'c for contributing Appendix~B, 
and for computing the index systems of some lattices.
They thank Bertrand Meyer for helpful suggestions on the text.
The third author would like to thank the 
Institut de Math\'ematiques at Universit\'e Bordeaux~1
for its great hospitality during two visits,
on which major parts of 
this work were created.

%
%

\providecommand{\bysame}{\leavevmode\hbox to3em{\hrulefill}\thinspace}
\providecommand{\MR}{\relax\ifhmode\unskip\space\fi MR }
\providecommand{\MRhref}[2]{%
  \href{http://www.ams.org/mathscinet-getitem?mr=#1}{#2}
}
\providecommand{\href}[2]{#2}

\end{document}